\newcommand{\stkout}[1]{\ifmmode\text{\sout{\ensuremath{#1}}}\else\sout{#1}\fi}
\newcommand{\F}{\mathcal{F}}
\renewcommand{\S}{\mathbb{S}}
\newcommand{\la}{\lambda}
\newcommand{\D}{\nabla}
\newcommand{\ra}{r_{\!A}}
\def\d{\partial}
\newcommand{\h}[1]{\widehat{#1}}
\def\e{\varepsilon}
\def\db{\bar{d}}
\def\sgn{\text{ sgn }}
\def\weak{\rightharpoonup}
\newcommand{\R}{\mathbb{R}}
\def\NN{\mathbb{N}}
\def\FF{\mathcal{F}}
\def\AA{\mathcal{A}}
\def\D{\nabla}
\newcommand{\dis}{\displaystyle}
\newcommand{\La}{\Lambda}
\renewcommand{\P}{\mathcal{P}}
\def\XXint#1#2#3{{\setbox0=\hbox{$#1{#2#3}{\int}$}
     \vcenter{\hbox{$#2#3$}}\kern-.5\wd0}}
\newcounter{bei}
\newcommand{\A}{\mathfrak{A}}
\renewcommand{\t}{\widetilde}
\renewcommand{\o}{\overline}
\renewcommand{\u}{\underline}
\newcommand{\De}{\nabla_{\!\e}}
\numberwithin{equation}{section}
\theoremstyle{plain}
\newtheorem{theorem}{Theorem}[section]
\newtheorem{lemma}[theorem]{Lemma}
\newtheorem{proposition}[theorem]{Proposition}
\theoremstyle{definition}
\newtheorem{definition}[theorem]{Definition}
\newtheorem{remark}[theorem]{Remark}
\newtheorem{example}[theorem]{Example}
\DeclareMathOperator{\sym}{sym}
\author[L. Freddi]{Lorenzo Freddi}
\author[P. Hornung]{Peter Hornung}
\author[M.G. Mora]{Maria Giovanna Mora}
\author[R. Paroni]{Roberto Paroni}
\address[L. Freddi]{Dipartimento di Scienze Matematiche, Informatiche e Fisiche,
via delle Scienze 206, 33100 Udine, Italy
}\email{lorenzo.freddi@uniud.it}
\address[P. Hornung]{Fachrichtung Mathematik,
TU Dresden,
01062 Dresden,
Germany
}\email{peter.hornung@tu-dresden.de}
\address[M.G. Mora]{Dipartimento di Matematica, Universit\`a di Pavia, via Ferrata 1, 27100 Pavia, Italy}
\email{mariagiovanna.mora@unipv.it}
\address[R. Paroni]{Dipartimento di Ingegneria Civile e Industriale, Universit\`{a} di
Pisa, Largo Lucio Lazzarino 1,
56122 Pisa, Italy} \email{roberto.paroni@unipi.it}
\begin{document}
\title[The Sadowsky functional with boundary conditions]
{Stability of boundary conditions for the Sadowsky functional}

\date{}

\begin{abstract}
It has been proved by the authors that the (extended) Sadowsky functional can be deduced as the $\Gamma$-limit of
the Kirchhoff energy on a rectangular strip of height $\e$, as $\e$ tends to $0$.
In this paper we show that this $\Gamma$-convergence result is stable when affine boundary conditions are prescribed
on the short sides of the strip. These boundary conditions include those corresponding to a M\"obius band.
\end{abstract}

\maketitle

\section{Introduction}\label{Sec0}

The derivation of variational models for thin structures is one of the most fruitful applications of $\Gamma$-convergence  in continuum mechanics.  
A typical example is a variational model for a two-dimensional structure obtained as a $\Gamma$-limit of energies for bodies occupying cylindrical regions whose heights tend to zero.
Quite often, such variational derivations focus on the asymptotic behavior of the bulk energy while partially or completely neglecting the contribution of external forces and/or boundary conditions. Usually, external forces, such as, e.g., dead loads, can be easily included in the analysis afterwards by using the stability of $\Gamma$-convergence with respect to continuous additive perturbations. On the other hand, boundary conditions may affect the $\Gamma$-limit of the bulk energy and, even if not so, they must be taken into account in the construction of the so-called recovery sequence. 
In some cases boundary conditions may be an essential feature of the structure under study: consider, for instance, a M\"obius band, where the two ends of the strip are glued together after a half-twist. 

In this paper we show that the $\Gamma$-convergence result leading to the derivation of the Sadowsky functional  (see \cite{FrHoMoPa,longversion})
is stable with respect to an appropriate set of boundary conditions, that include those corresponding to a M\"obius band.

In the last years the Sadowsky functional and, more in general, the theory of elastic ribbons have received a great deal of attention. 
Part of the reappraisal on the subject is due to the work \cite{SH2007} by
Starostin and van der Heijden on elastic M\"obius strips. Since then the literature has been increasing in several directions, as partially documented in the book \cite{FoFr}
edited by Fosdick and Fried. 
Indeed, the mechanics of M\"obius elastic ribbons has been studied, e.g., in \cite{BH2015,MH18,SH2015}.
The morphological stability of ribbons has been considered in \cite{AS,CDD,LSSM,MH18} and their helicoidal-to-spiral transition in \cite{ADK,PT,TeV,ToV}.
The relation between rods and ribbons, as well as the derivation of viscoelastic models, has been investigated in
\cite{AN,BFV,DA2015,FM}, while models of ribbons with moderate displacements have been deduced in \cite{Da,FrHoMoPa2,FrMoPa2}.
Finally, for numerics and experiments on ribbons we refer to \cite{Ba,CDNR,KAB,KuHB,Yu,YDMG,YH}.

The Sadowsky functional has been introduced by Sadowsky in 1930 as a formal limit of the Kirchhoff energy for a M\"obius band of vanishing width (see \cite{HF2015b,HF2015a,Sadowsky2}).
In \cite{FrHoMoPa,longversion} this derivation has been made precise, using the language of $\Gamma$-convergence, 
for a narrow ribbon without any kind of boundary conditions or topological constraints.
More precisely, let $S_\e=(0,\ell)\times (-\e/2,\e/2)$ be the reference configuration of an inextensible isotropic strip,
where the width $\e$ is much smaller that the length $\ell$. The Kirchhoff energy of the strip is 
$$
E_\e(u)= \frac{1}{\e}\int_{S_{\e}} |\Pi_u(x)|^2\, dx,
$$
where $u:S_\e\to\R^3$ is a $W^{2,2}$-isometry and $\Pi_u$ is the second fundamental form of the surface $u(S_\e)$. 
Note that by Gauss's Theorema Egregium $\det \Pi_u=0$.
In \cite{FrHoMoPa,longversion} it has been proved that the $\Gamma$-limit of $E_\e$, as $\e\to0$, 
provides an extension of the classical Sadowsky functional 
and is given by
\begin{equation}\label{defJ}
E(y,d_1,d_2,d_3)=\int_{0}^{\ell} \o Q(d_1'\cdot d_3, d_2'\cdot d_3)\,ds
\end{equation}
where the unit vectors $d_i$ are such that $r^T:=(d_1|d_2|d_3)\in W^{1, 2}((0,\ell); SO(3))$ 
and satisfy the nonholonomic constraint 
\begin{equation}\label{BC}
d_1^\prime\cdot d_2=0,
\end{equation}
while the deformation $y$ of the centerline of the strip is related to the system of directors by the equation
\begin{equation}\label{BC1}
y'=d_1.
\end{equation}
In other words, the director $d_1$ is the tangent vector to the deformed centerline. The director $d_2$ describes the ``transversal orientation'' of the deformed strip, hence
the constraint \eqref{BC} means that the strip cannot bend within its own plane. The director $d_3$ represents the normal vector to the deformed strip.
The energy depends on the two quantities $d_1'\cdot d_3$ and $d_2'\cdot d_3$, that represent the bending strain and the twisting strain of the strip, respectively. 
The limiting energy density $\overline Q$ is given by
\begin{equation}\label{barQ}
\overline Q(\mu,\tau) = \begin{cases}
\dfrac{(\mu^2+\tau^2)^2}{\mu^2} & \text{ if } |\mu|>|\tau|,
\smallskip\\
4\tau^2 & \text{ if } |\mu|\leq|\tau|,
\end{cases}
\end{equation}
hence it is a convex function that coincides with the classical Sadowsky energy density for $|\mu|>|\tau|$.

The $\Gamma$-convergence result in \cite{FrHoMoPa,longversion} is supplemented by suitable compactness properties, that
guarantee convergence of minimizers of the Kirchhoff energy to minimizers of the (extended) Sadowsky functional \eqref{defJ}.

As already mentioned, these results were proved without any kind of boundary conditions or topological constraints.
Therefore, the question remained open of whether the Sadowsky functional correctly describes the behavior of narrow M\"obius bands or of other closed narrow ribbons.
 
In this paper we answer this question by considering prescribed affine boundary conditions on the short sides $\{0\}\times (-\e/2,\e/2)$ and $\{\ell\}\times (-\e/2,\e/2)$.
%, which include those satisfied by a M\"obius band. 
We prove (see Theorem~\ref{Gamma}) that, as $\e\to0$, the $\Gamma$-limit of the Kirchhoff energy is still given by the Sadowsky functional
\eqref{defJ}, where now the frame $(y,r)$ satisfies, in addition to \eqref{BC}--\eqref{BC1}, a set of boundary conditions that we now describe.

By translating and rotating the coordinate system, we
may assume, with no loss of generality, that the side $\{0\}\times (-\e/2,\e/2)$ 
is clamped and that, on this short side, the ribbon is tangent to the undeformed centerline of the ribbon.
In the limit problem this boundary condition leads to
\begin{equation}\label{bc-intro}
y(0) =0\quad \mbox{ and } \quad  r(0) = I,
\end{equation}
where $I$ is the identity matrix.
Similarly, the boundary condition on the side 
$\{\ell\}\times (-\e/2,\e/2)$
leads to
\begin{equation}\label{bc-intro2}
y(\ell) = \o y \quad \mbox{ and } \quad r(\ell) = \o r,\
\end{equation}
where $\o y\in \R^3$ is the ending point of the deformed centerline 
and $\o r\in SO(3)$ is the orientation in the deformed configuration of the short side at $\ell$.

We note that these boundary conditions can model both open and closed ribbons. In particular, 
a M\"obius band satisfies the above boundary conditions with $\o y=0$ and $\o r=(e_1|-e_2|-e_3)$.
However, we point out that the geometrical boundary conditions \eqref{bc-intro}--\eqref{bc-intro2} are insensitive to the number of full turns of the director $d_2$ along the centerline, thus a closed ribbon with an odd number of half-twists satisfies the same boundary conditions as a M\"obius band.
Alternatively, one may prescribe both the boundary conditions and the linking number of the strip (see, e.g., \cite{AlexanderAntman}). This will be addressed in future work.

We close this introduction by discussing the main mathematical difficulties in the proof of our main result (Theorem~\ref{Gamma}).
Compactness and the liminf inequality can be proved by relying on the results of \cite{FrHoMoPa,longversion}. On the other hand, 
the construction of the recovery sequence has to be modified in a non trivial way to satisfy the prescribed boundary conditions on the short sides of the strip.
The strategy in \cite{FrHoMoPa,longversion} is as follows. The limiting energy density $\overline Q$ is obtained by relaxing the zero determinant constraint with respect to the weak convergence in $L^2$. More precisely, \cite[Lemma~3.1]{FrHoMoPa} shows that the lower semicontinuous envelope of the functional
$$
M\in L^2((0,\ell);\R^{2\times 2}_{\sym}) \ \mapsto \ \begin{cases}
\displaystyle\int_0^\ell |M(s)|^2\, ds & \text{ if } \det M=0,
\\
+\infty & \text{ otherwise,}
\end{cases}
$$
with respect to the weak topology of $L^2((0,\ell);\R^{2\times 2}_{\sym})$ is given by
$$
M\in L^2((0,\ell);\R^{2\times 2}_{\sym}) \ \mapsto \int_0^\ell \big( |M(s)|^2+2|\det M(s)|\big)\, ds.
$$
The energy density $\overline Q$ is then defined by the minimization problem
\begin{equation}\label{barQ-min}
\overline Q(\mu,\tau) = \min_{\gamma\in\R}\Big\{ |M|^2+2|\det M| : \ M=\Big(\begin{array}{cc} \mu & \tau \\ \tau & \gamma \end{array}\Big)\Big\},
\end{equation}
from which equation \eqref{barQ} follows. Given a frame $(y,r)$ satisfying \eqref{BC}--\eqref{BC1}, one can define $\mu:=d_1'\cdot d_3$,
$\tau:=d_2'\cdot d_3$, and $\gamma$ as a solution of the minimization problem \eqref{barQ-min}. By the relaxation result there exists a sequence $(M^j)\subset L^2((0,\ell);\R^{2\times 2}_{\sym})$ with $\det M^j=0$ for every $j$, such that $(M^j)_{11}\weak\mu$ and $(M^j)_{12}\weak\tau$ weakly in $L^2(0,\ell)$, and
\begin{equation}\label{int-relax}
\int_0^\ell |M^j(s)|^2\, ds \ \to \ \int_0^\ell \overline Q(\mu,\tau)\, ds = E(y,d_1,d_2,d_3).
\end{equation}
By approximation one can assume $M^j$ to be smooth and by a diagonal argument it is enough to construct a recovery sequence for $M^j$ for every $j$.
To do so, we first build a new frame $(y^j,r_j)$ satisfying \eqref{BC}--\eqref{BC1} and such that $(d^j_1)^\prime\cdot d^j_3=(M^j)_{11}$ and
$(d^j_2)^\prime\cdot d^j_3=(M^j)_{12}$. Finally, for $\e$ small enough we construct smooth isometries $u_\e:S_\e\to\R^3$ such that 
$\nabla u_\e(\cdot,0)=(d^j_1|d^j_2)$ and $\Pi_{u_\e}(\cdot,0)=M^j$ on $(0,\ell)$. These two properties guarantee that $(u_\e)$ is a recovery sequence.

In the presence of boundary conditions, the given frame $(y,r)$ satisfies in addition \eqref{bc-intro}--\eqref{bc-intro2}. In this case one has first to ensure that
the auxiliary frames $(y^j,r_j)$ still satisfy \eqref{bc-intro}--\eqref{bc-intro2} and then construct the smooth isometries $u_\e$ in such a way that
the boundary conditions are met on the short sides of the strip. The key ingredient to do so is a refinement of the relaxation result \cite[Lemma~3.1]{FrHoMoPa}, showing that
the sequence $(M^j)$ in \eqref{int-relax} can be modified in such a way to accomodate the boundary conditions (see Proposition~\ref{haupt}).
This is based on density results for framed curves preserving boundary conditions, proved in \cite{framed}.

In the last part of the paper we derive the Euler-Lagrange equations for the functional \eqref{defJ} with boundary conditions
\eqref{bc-intro}--\eqref{bc-intro2}
and we show, under some regularity assumptions, that the centerline of a developable M\"obius band at equilibrium cannot be a planar curve.\bigskip

\noindent
{\bf Plan of the paper.} In Section~\ref{GUBC} we set the problem, we prove compactness for deformations with equibounded energy, and we state the $\Gamma$-convergence result.
Section~\ref{UB} contains the approximation results that are key to prove the existence of a recovery sequence in Section~\ref{RS}.
In Section~\ref{eqeq} we derive the equilibrium equations for the limit boundary value problem and
in the last section we focus on regular solutions of this problem in the case of a M\"obius band.\bigskip

\noindent
{\bf Notation.} Along the whole paper $(e_1,e_2,e_3)$ and $(\u e_1,\u e_2)$ denote the canonical bases of $\R^3$ and $\R^2$, respectively.  

%The pair $(y,r)$, with $r^Te_1=y'$,
%makes what is called a framed curve. A crucial role is played by the fact that 
%any such framed curve with $r\in\ W^{1,2}(I;SO(3))$ and $d_1'\cdot d_2=0$ is a solution of the first order differential system
%\begin{equation}\label{fcds}
%r' = \begin{pmatrix}
%0 & 0 & \mu
%\\
%0 & 0 & -\tau
%\\
%-\mu & \tau & 0
%\end{pmatrix}r
%\end{equation}
%with $\mu=d_1'\cdot d_3$ and $\tau=-d_2'\cdot d_3$. This is consistent with the notation $r^T=(d_1|d_2|d_3)$, already used above,
%where $d_i$ is the $i$-th column vector of the rotation $r^T$ with respect to canonical basis $(e_1,e_2,e_3)$ of $\R^3$, that is,  $d_i=r^Te_i$. 

\section{Setting of the problem and main result}\label{GUBC}

In this section we recall the setting of the problem and state the main result (Theorem~\ref{Gamma}), which proves stability of $\Gamma$-convergence with respect to an appropriate set of geometric boundary conditions.
 
We consider the interval $I=(0,\ell)$ with $\ell>0$.
For $0<\e\ll 1$ let $S_{\e} = I\times (-\e/2, \e/2)$ be the reference configuration of an inextensible elastic narrow strip.
We assume the energy density of the strip, $Q:\R^{2\times 2}_{\sym} \to[0,+\infty)$,  to be an isotropic and quadratic function of the second fundamental form. The Kirchhoff energy of the strip is 
$$
E_{\e}(u) = \frac{1}{\e}\int_{S_{\e}} Q(\Pi_u(x))\, dx,
$$
where the second fundamental form of $u$, 
$\Pi_u : S_\e\to\R^{2\times 2}_{\sym}$, is defined by
$$
(\Pi_u)_{ij} = n_u\cdot\d_i\d_j u,
$$
and
$$
n_u = \d_1 u\times\d_2 u
$$
is the unit normal to $u(S_\e)$. 
Due to the inextensibility constraint deformations $u : S_\e\to \R^3$ satisfy the relations
 $\partial_i u\cdot\partial_j u = \delta_{ij}$, where $\delta_{ij}$ is the Kronecker delta.
We denote the space of $W^{2,2}$-isometries of $S_\e$ by 
$$
W^{2,2}_{\rm iso}(S_\e;\R^3):= \big\{ u\in W^{2,2}(S_\e;\R^3): \ \partial_i u\cdot\partial_j u=\delta_{ij} \text{ a.e.\ in } S_\e\big\}.
$$

Since the energy density $Q$ is isotropic, it depends on $\Pi_u$ only through the trace and the determinant of  $\Pi_u$.
On the other hand, the inextensibility constraint and Gauss's Theorema Egregium imply that the Gaussian curvature is equal to zero, that is,
$\det \Pi_u=0$. Thus, the energy may be expressed in terms of $\mbox{tr}\,\Pi_u$ only. Equivalently, since $|\Pi|^2+ 2\, \mbox{det}\, \Pi=(\mbox{tr}\,\Pi)^2$
for every $\Pi\in \R^{2\times 2}_{\sym}$, we can write the energy in terms of the norm of  $\Pi_u$. With these considerations in mind, up to a multiplicative constant, 
the energy can be written as
$$
E_{\e}(u) = \frac{1}{\e}\int_{S_{\e}} |\Pi_u(x)|^2\, dx. 
$$

We shall require deformations $u$ to satisfy ``clamped'' boundary conditions at
$x_1=0$ and $x_1=\ell$. By composing deformations with a rigid motion, we may assume, without loss of generality, that
$$
u(0,x_2) = x_2,\quad \d_1 u(0, x_2) = e_1\qquad \mbox{ for }x_2\in(-\e/2,\e/2).
$$
To set the boundary conditions at $x_1=\ell$, we fix $\o y\in\R^3$ and $\o r^T=(\db_1|\db_2|\db_3)\in SO(3)$
and require that
$$
u(\ell,0)=\o y,\quad \d_i u(\ell, x_2) = \o d_i\qquad \mbox{ for }x_2\in(-\e/2,\e/2),
$$
for $ i=1,2$. We note that the imposed boundary conditions keep straight the sections at $x_1=0$ and $x_1=\ell$,
or, in other words, the image of $u(0, \cdot)$ and $u(\ell, \cdot)$ are straight lines. Moreover, the inextensibility constraint implies that $|\o y|\le \ell$: indeed,
$$
|\o y|=|u(\ell,0)-u(0,0)|\le \int_0^\ell |\partial_1 u(x_1,0)|\,dx_1 =\ell.
$$

We thus consider as domain of the energy $E_\e$ the admissible class
\begin{align*}
\AA_\e = \big\{u\in W^{2,2}_{\rm iso}(S_\e;\R^3) &: 
u(0,x_2) = x_2,\, \d_1 u(0, x_2) = e_1,
\\
&\quad \, u(\ell,0)=\o y,\, \d_i u(\ell, x_2) = \o d_i,\, i=1,2
\big\},
\end{align*}
where all equalities are in the sense of traces.

If $|\o y|= \ell$, the midline $I\times\{0\}$ of the strip cannot deform and the cross sections cannot twist around the midline,
because otherwise the ``fibers'' $I\times\{\pm\e\}$ would get shorter or longer. In other words, if $|\o y|= \ell$, the whole strip cannot deform. 
This is proved in the next lemma.

\begin{lemma}\label{starrm}
Let $\e > 0$ and let $u\in W^{2,2}_{\rm iso}(S_\e;\R^3)$  be such that
$u(0) = 0$, $\D u = (e_1\, |\, e_2)$ on $\{0\}\times (-\e/2, \e/2)$,
$u(\ell, 0) = \ell e_1$, and $\D u$ constant on $\{\ell\}\times (-\e/2, \e/2)$. Then $u(x)=x_1e_1+x_2e_2$ on $S_{\e}$ .
\end{lemma}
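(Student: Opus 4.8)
The plan is to show that $u$ maps every horizontal fibre $(0,\ell)\times\{x_2\}$ of $S_\e$ onto a straight segment of length $\ell$, and that these segments are pinned at both endpoints; this forces $u$ to be affine with $\D u=(e_1|e_2)$. Note that this will use only inextensibility together with the prescribed traces — neither the Gauss constraint $\det\Pi_u=0$ nor the developability structure of $W^{2,2}$ isometries enters. In effect this is the rigorous version of the heuristic quoted just before the lemma: not merely the outermost fibres, but all of them, are forced to remain straight.

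\emph{Step 1: the traces on the short sides.} Since $W^{2,2}(S_\e;\R^3)\embed C^0(\o{S_\e};\R^3)$, $u$ is continuous up to the boundary and has well-defined traces there. Integrating $\d_2u=e_2$ along $\{0\}\times(-\e/2,\e/2)$ starting from $u(0,0)=0$ gives $u(0,x_2)=x_2e_2$. Writing $(\db_1|\db_2)$ for the constant value of $\D u$ on $\{\ell\}\times(-\e/2,\e/2)$ and integrating $\d_2u(\ell,\cdot)=\db_2$ from $u(\ell,0)=\ell e_1$ gives $u(\ell,x_2)=\ell e_1+x_2\db_2$.

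\emph{Step 2: $\db_2=e_2$, and each fibre is straight.} By Fubini, for a.e.\ $x_2\in(-\e/2,\e/2)$ the slice $\gamma_{x_2}:=u(\cdot,x_2)$ is absolutely continuous on $[0,\ell]$ with $\gamma_{x_2}'=\d_1u(\cdot,x_2)$ and $|\gamma_{x_2}'|=1$ a.e., and — by continuity of $u$ — with endpoints $\gamma_{x_2}(0)=x_2e_2$ and $\gamma_{x_2}(\ell)=\ell e_1+x_2\db_2$. Hence, for a.e.\ $x_2$,
\[
\ell=\int_0^\ell|\gamma_{x_2}'(t)|\,dt\ \ge\ \Big|\int_0^\ell\gamma_{x_2}'(t)\,dt\Big|=\big|\gamma_{x_2}(\ell)-\gamma_{x_2}(0)\big|=\big|\ell e_1+x_2(\db_2-e_2)\big|.
\]
Squaring, $\ell^2\ge|\ell e_1+x_2(\db_2-e_2)|^2$ for a.e.\ $x_2$, and since both sides are polynomials in $x_2$ this holds for every $x_2\in(-\e/2,\e/2)$. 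Thus the quadratic
\[
p(x_2):=\ell^2-\big|\ell e_1+x_2(\db_2-e_2)\big|^2=-2\ell\,(e_1\cdot\db_2)\,x_2-|\db_2-e_2|^2\,x_2^2
\]
is nonnegative near $0$ and vanishes at $0$; therefore $e_1\cdot\db_2=0$ (otherwise $p$ changes sign at $0$), and then $p(x_2)=-|\db_2-e_2|^2x_2^2\ge0$ forces $\db_2=e_2$. Consequently, for a.e.\ $x_2$ the chord $|\gamma_{x_2}(\ell)-\gamma_{x_2}(0)|=|\ell e_1|=\ell$ equals the length of $\gamma_{x_2}$, so equality holds in the first inequality above; this forces $\gamma_{x_2}'$ to be a.e.\ equal to the constant $\tfrac1\ell(\gamma_{x_2}(\ell)-\gamma_{x_2}(0))=e_1$. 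Hence $\gamma_{x_2}(t)=x_2e_2+te_1$ for a.e.\ $x_2$ and all $t$, i.e.\ $u(x_1,x_2)=x_1e_1+x_2e_2$ for a.e.\ $(x_1,x_2)\in S_\e$, and everywhere by continuity of $u$.

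\emph{Main difficulty.} The statement is elementary and there is no genuine analytic obstacle; the only care needed is the measure-theoretic bookkeeping in Step 2 — namely that for a.e.\ $x_2$ the one-dimensional slice $u(\cdot,x_2)$ is absolutely continuous with derivative $\d_1u(\cdot,x_2)$, and that the endpoint values of (the continuous representative of) this slice coincide with the boundary traces of $u$ computed in Step 1. Both follow from the Sobolev embedding $W^{2,2}\embed C^0$ together with the Fubini property of Sobolev functions.
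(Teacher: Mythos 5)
Your proof is correct, but it takes a genuinely different route from the paper's. The paper's argument relies on the structure theory of $W^{2,2}$ isometries: since $|u(\ell,0)-u(0,0)|=\ell$ equals the length of the midline, a rigidity result from \cite{EberhardHornung} gives that $\D u$ is constant along $I\times\{0\}$, and then the developability structure (every point where $\D u$ is not locally constant lies on a ruling segment with endpoints on $\partial S_\e$, and the components of the locally constant set are bounded by such segments) yields a contradiction, because any such segment would have to meet the midline or one of the two clamped short sides. You instead use only inextensibility and the boundary traces: each horizontal fibre $x_1\mapsto u(x_1,x_2)$ is a unit-speed curve of length $\ell$ whose endpoints are pinned at $x_2e_2$ and $\ell e_1+x_2\db_2$, the quadratic-in-$x_2$ inequality forces $e_1\cdot\db_2=0$ and then $\db_2=e_2$ (a conclusion the paper instead extracts from constancy of $\D u$ along the midline), and equality of chord and arc length on every fibre forces $\partial_1 u=e_1$ a.e., hence $u(x)=x_1e_1+x_2e_2$ by continuity. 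Your version is more elementary and self-contained --- it never invokes the $W^{2,2}$-isometry structure theory, only continuity, the Fubini/ACL property, the a.e.\ orthonormality of $\D u$, and the identification of slice endpoints with boundary traces, which you correctly flag as the only point requiring care --- whereas the paper's proof is shorter given the cited machinery from \cite{EberhardHornung}. Both are complete proofs of the statement.
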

\begin{proof}
Since $|u(0,0)-u(\ell,0)|=\ell=|(0,0)-(\ell,0)|$, by \cite[Lemma~2.4]{EberhardHornung} the gradient $\D u$ is constant on
the line segment $L=I\times\{0\}$.
Hence, $\D u = (e_1\, |\, e_2)$ on $L$, because $L$ intersects
$\{0\}\times (-\e/2, \e/2)$
and $\D u = (e_1\, |\, e_2)$ on this set. This implies, in particular, that $\D u = (e_1\, |\, e_2)$ on $\{\ell\}\times (-\e/2, \e/2)$.

Assume by contradiction that there exists $\hat x\in S_{\e}$ such that $\D u(\hat x)\neq (e_1\, |\, e_2)$.
Let $C_{\D u}$ be the set of points $x$ for which $\D u$ is constant in a neighborhood of $x$. 
If $\hat x\in S_{\e}\setminus C_{\D u}$, then there exists a unique line segment $[\hat x] \subset S_\e$ with
both endpoints on the boundary $\d S_{\e}$ such that the deformation gradient $\D u$ is constant on $[\hat x]$, see \cite{EberhardHornung}.
Since $[\hat x]$ must intersect $L$ or $\{0\}\times (-\e/2, \e/2)$ or $\{\ell\}\times (-\e/2, \e/2)$, we obtain a contradiction.
If $\hat x\in C_{\D u}$, then the boundary of the connected component of $C_{\D u}$ to which $\hat x$ belongs, contains at least a segment that intersects $L$ or $\{0\}\times (-\e/2, \e/2)$ or $\{\ell\}\times (-\e/2, \e/2)$, providing again a contradiction.
\end{proof}

By Lemma~\ref{starrm} and the above considerations we have that: if $|\o y|>\ell$, then $\AA_\e=\varnothing$;
if $|\o y|=\ell$, then $\AA_\e$ is either the empty set or reduces to the single map $u_0(x)=x_1e_1+ x_2e_2$,
according to $(\o d_1,\o d_2)$ being different or equal to $(e_1,e_2)$. Hence, the only non trivial case is $|\o y|<\ell$ (note that, if $|\o y|<\ell$,
then $\AA_\e\neq\varnothing$ for $\e>0$ small enough by Remark~\ref{nonempty} below).

Hereafter, we shall always assume that $|\o y|<\ell$.

\subsection{Change of variables}
We now change variables in order to rewrite the energy on the fixed domain
$$
S = I\times \big(-\tfrac12, \tfrac12 \big).
$$
We introduce the rescaled version $y: S\to\R^3$ of $u$ by setting
$$
y(x_1, x_2) = u(x_1, \e x_2).
$$
We have that
$$
\De y(x) = \D u(x_1, \e x_2),
$$
where the scaled gradient is defined by
$$
\De\, \cdot = (\d_1\cdot\, |\, \e^{-1}\d_2\,\cdot\,).
$$
In particular, if $u\in W^{2,2}_{\rm iso}(S_\e;\R^3)$, the map $y$ belongs to the space of {\it scaled isometries}
$$
W^{2,2}_{\rm iso, \e}(S;\R^3):= \big\{ y\in W^{2,2}(S;\R^3): \ |\partial_1 y|=\e^{-1} |\partial_2 y|=1, \
\partial_1 y\cdot \e^{-1} \partial_2 y=0 \text{\ in } S \big\},
$$
and 
$$
u\in \AA_\e\ \iff\ y\in \AA_\e^s,
$$
where the admissible class of scaled isometries $ \AA_\e^s$ is defined by
\begin{align}
\AA_\e^s = \big\{y\in W^{2,2}_{\rm iso,\e}(S;\R^3) &: \
y(0,0) = 0,
\,\d_1 y(0, x_2) = e_1,\ \d_2 y(0, x_2) = \e e_2, %\mbox{ for a.e.\ }x_2\in(-1/2,1/2), 
\nonumber
\\
&\quad y(\ell,0)=\o y,\, \d_1 y(\ell, x_2) = \o d_1,\ \d_2 y(\ell, x_2) = \e \o d_2\big\}. \label{Aepss}
\end{align}

We define the scaled unit normal to $y(S)$ by
$$
n_{y,\e} = \d_1 y\times \e^{-1}\d_2 y
$$
and the scaled second fundamental form of $y(S)$ by
$$
\Pi_{y,\e} =
\begin{pmatrix}
n_{y,\e}\cdot\d_1\d_1 y & \e^{-1}n_{y,\e}\cdot\d_1\d_2 y
\smallskip\\
\e^{-1}n_{y,\e}\cdot\d_1\d_2 y & \e^{-2}n_{y,\e}\cdot\d_2\d_2 y
\end{pmatrix},
$$
so that $\Pi_u(x_1, \e x_2) = \Pi_{y,\e}(x_1, x_2)$. Finally, we denote the scaled energy by
\begin{equation}\label{Jeps}
J_{\e}(y) = \int_S |\Pi_{y,\e}(x)|^2\, dx
\end{equation}
and we have $J_{\e}(y) = E_{\e}(u)$.

\subsection{Statement of the main results}

As $\e$ approaches zero, the convergence of the admissible deformations 
leads naturally, as shown in Lemma~\ref{compactness} below, to the admissible class
\begin{align}
\AA_0 = & \big\{ (y, r)\in W^{2,2}(I; \R^3)\times W^{1,2}(I; SO(3))  :\ r^T=(d_1|d_2|d_3),
\nonumber \\
& \quad y'=d_1,\, d_1'\cdot d_2=0,\, y(0) = 0,\, r(0) = I,\,
y(\ell) = \o y,\, 
r(\ell) = \o r\,\big\}. \label{AA0}
\end{align}

\begin{proposition}\label{A0-ne}
Assume  $|\o y|<\ell$. Then $\AA_0\ne\varnothing$.
\end{proposition}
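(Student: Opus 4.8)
My plan is to produce an element of $\AA_0$ explicitly. Note first that specifying $\mu,\tau\in L^2(I)$ and solving the linear system
\[
d_1'=\mu d_3,\qquad d_2'=\tau d_3,\qquad d_3'=-\mu d_1-\tau d_2,\qquad (d_1,d_2,d_3)(0)=(e_1,e_2,e_3)
\]
produces, setting $r^T:=(d_1|d_2|d_3)$ and $y:=\int_0^{\,\cdot}d_1$, a pair with $r^T\in W^{1,2}(I;SO(3))$, $y\in W^{2,2}(I;\R^3)$, $y'=d_1$, $d_1'\cdot d_2=0$, $y(0)=0$, $r(0)=I$; and every element of $\AA_0$ arises in this way. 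So I only need to choose $(\mu,\tau)$ so that in addition $r^T(\ell)=\o r^T$ and $y(\ell)=\o y$; I call a pair obtained as above an \emph{admissible frame}, and the ones I construct will have bounded $\mu,\tau$. I would record three structural facts, all immediate from linearity and autonomy of the system: admissible frames are stable (a) under concatenating the controls on adjacent subintervals, continuing $r^T$ and $y$ by their endpoint values; (b) under left multiplication of $r^T$ by a constant $C\in SO(3)$, which leaves $(\mu,\tau)$ unchanged and turns $y$ into $Cy$ up to a constant; (c) under the reversal sending an admissible frame on $[0,L]$ to $s\mapsto\big(r^T(L-s),\,y(L)-y(L-s)\big)$, whose controls are $-(\mu,\tau)(L-\cdot)$. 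Moreover, $\mu=\tau=0$ on a subinterval freezes the frame and moves $y$ along a straight segment, $\mu=0$ rotates $(d_2,d_3)$ about the then-fixed $d_1$, and $\tau=0$ rotates $(d_1,d_3)$ about the then-fixed $d_2$.

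I would then reduce to two claims in which only half of the terminal data is prescribed: (A) for every $L>0$ and $R\in SO(3)$ there is an admissible frame on $[0,L]$ with $r^T(L)=R$; (B) for every $L>0$ and $p\in\R^3$ with $|p|<L$ there is an admissible frame on $[0,L]$ with $\int_0^Ld_1=p$ and terminal frame unconstrained. From (B) and (c) one gets a \emph{loop} version: for $|w|<L$ there is an admissible frame on $[0,L]$ with $r^T(L)=I$ and $\int_0^Ld_1=w$; indeed, apply (B) on $[0,L/2]$ with $p=w/2$, reach some $S\in SO(3)$ with displacement $w/2$, and glue onto it, on $[L/2,L]$, its reversal, which starts at $S$, returns to $I$, and contributes a further $w/2$. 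To conclude, fix $\delta\in\big(0,\tfrac12(\ell-|\o y|)\big)$. By (A) there is an admissible frame $F$ on $[0,\delta]$ with $r^T(\delta)=\o r^T$; put $q:=\int_0^\delta d_1$, so $|q|\le\delta$. By the loop version (translated to $[\delta,\ell]$) there is an admissible frame $g^T$ with $g^T(\delta)=g^T(\ell)=I$ and $\int_\delta^\ell g^Te_1\,ds=\o r(\o y-q)$, which is legitimate since $|\o r(\o y-q)|\le|\o y|+\delta<\ell-\delta$. Concatenating $F$ on $[0,\delta]$ with $\o r^Tg^T$ on $[\delta,\ell]$ yields, by (a)--(b), an admissible frame which is continuous at $\delta$ (both pieces equal $\o r^T$ there), equals $I$ at $0$ and $\o r^T$ at $\ell$, and whose $y$ runs $0\mapsto q\mapsto q+\o r^T\big(\o r(\o y-q)\big)=\o y$. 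Hence $(y,r)\in\AA_0$.

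For (A) I would split $[0,L]$ into three pieces and use, in turn, $\mu=0$ (rotate $(d_2,d_3)$ about $d_1=e_1$ by $\theta_1$), then $\tau=0$ (rotate about the current $d_2$ by $\theta_2$), then $\mu=0$ (rotate about the current $d_1$ by $\theta_3$); a short computation identifies the terminal frame with $\rho_1(\theta_1)\rho_2(\theta_2)\rho_1(\theta_3)$, where $\rho_i(\theta)$ is the rotation by $\theta$ about $e_i$, and since Euler angles with repeated outer axis parametrise $SO(3)$, suitable $\theta_i$ give any $R$. For (B), if $p=0$ a planar circle works ($\mu$ constant with $\int_0^L\mu=2\pi$, $\tau=0$); if $p\ne0$, put $\widehat p:=p/|p|$, choose $\delta_0\in\big(\tfrac12(L-|p|),\,L-|p|\big)$, and take a curve that on $[0,\delta_0]$ turns $d_1$ smoothly from $e_1$ to $\widehat p$ with endpoint $(|p|-L+\delta_0)\widehat p$ (which has length $L-\delta_0-|p|\in(0,\delta_0)$) and on $[\delta_0,L]$ is the straight segment in direction $\widehat p$, so that the total displacement is exactly $p$; its admissible frame is recovered from $d_1$ by letting $d_2$ be the unit normal to $\Span(d_1,d_1')$ where $d_1'\ne0$ and continuing it by constancy along the straight part. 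Such a turning arc — prescribed length, prescribed endpoint tangents, endpoint strictly inside the ball of radius equal to the length — exists by an elementary planar construction (circular arcs and segments in $\Span(e_1,\widehat p)$, with the surplus length absorbed into the curvature).

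I expect the main obstacle to be the bookkeeping in (B): realising the turning arc with exactly the prescribed length and endpoint data, and verifying that the frame obtained from $d_1$, together with the frames glued across all the junctions, genuinely belongs to $W^{1,2}$; the surjectivity of the Euler-angle product used in (A), although classical, also has to be invoked carefully. Alternatively, the nonemptiness of $\AA_0$ can be deduced from the density results for framed curves with boundary conditions established in \cite{framed}.
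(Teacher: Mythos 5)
Your overall architecture — encoding frames by the controls $(\mu,\tau)$, the reduction to the two half-problems (A) and (B), the reversal/symmetrisation trick producing loops, and the final assembly with the left-multiplication by $\o r^T$ — is sound, and it is organised differently from the paper, which instead glues explicit ``straight'' and ``circular'' frames and tunes one free length (or simply invokes \cite{H2}). However, there is a genuine gap in your construction for claim (B). An admissible frame, in your own sense, must satisfy $(d_1,d_2,d_3)(0)=(e_1,e_2,e_3)$, but the frame you attach to the planar curve in $\Span(e_1,\hat p)$ by taking $d_2$ to be the unit normal of that plane has $d_2(0)=\pm\nu$, where $\nu$ is the plane normal; this equals $\pm e_2$ only when $p\in\Span(e_1,e_3)$. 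For general $p$ the frame built in (B) therefore does not start at the identity, and this propagates: in your loop lemma the reversal of the (B)-piece ``returns'' not to $I$ but to the initial frame $\rho_1(\alpha)$ of that piece, and normalising by left-multiplication with $\rho_1(\alpha)^{-1}$ rotates the displacement, so you only control loops based at $I$ whose displacement lies in $\Span(e_1,e_3)$ — whereas the final assembly needs a loop with the arbitrary displacement $\o r(\o y-q)$. The repair is within your own toolkit: prepend to the planar piece a short segment with $\mu=0$ and constant $\tau$, during which the curve advances along $e_1$ while $d_2$ is twisted onto the plane normal, and run the planar construction for the reduced target $p-\delta_1 e_1$ (still of norm $<L-\delta_1$ for $\delta_1$ small, since $|p|<L$ is strict); you must also treat the degenerate case $\hat p=\pm e_1$, where $\Span(e_1,\hat p)$ is a line and a plane has to be chosen. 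As written, though, (B), hence the loop lemma and the conclusion, has a hole.

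Two secondary points. The ``turning arc'' you defer (a planar arc of exactly prescribed length, prescribed initial and terminal tangents, and terminal point strictly inside the ball of radius equal to the length) is not cosmetic: it is precisely the content the paper makes explicit by assembling circular arcs and segments and adjusting the positions of two auxiliary points so that the total length is exactly right; you should either carry out that computation or at least fix $\delta_0$ with definite slack (say $\delta_0=\tfrac34(L-|p|)$) and exhibit the arc. Finally, your closing alternative is slightly misattributed: nonemptiness of $\AA_0$ is obtained in the paper from \cite{H2}, not from the results of \cite{framed}, whose correction/density theorems presuppose a nondegenerate admissible frame to start from. Apart from these points — in particular once (B) is fixed — your scheme (Euler angles with repeated outer axis for (A), reversal for the loop) gives a correct and genuinely different organisation of the same kind of explicit construction as the paper's.
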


\begin{proof}
The proposition follows from \cite[Proposition~3.1]{H2}. We provide here an explicit construction for the reader's convenience.

Given $\o y$ and $\o r$, with  $|\o y|<\ell$, we shall construct a pair $(y, r)\in \AA_0$.
To satisfy easily the constraint $d_1'\cdot d_2=0$, we shall build $y$   by ``glueing'' together straight curves and arcs
of circles.

We start by giving two definitions.

The pair $(y, r):[\ell^i,\ell^e]\to \R^3\times SO(3)$, where $\ell^i<\ell^e$, with 
 $y(t)=t e_1+\hat y$, $d_1(t)=e_1$, $d_2(t)=\cos(\alpha t+\beta) e_2 +\sin(\alpha t+\beta) e_3$, for some $\hat y\in\R^3$, $\alpha,\beta\in \R$, and $d_3(t)=d_1(t)\wedge d_2(t)$ shall be called a {\it straight frame} starting at $y(\ell^i)$, parallel to $e_1$, and that rotates $d_2(\ell^i)$ to  $d_2(\ell^e)$. Given three generic unit vectors $d, b^i, b^e$ such that $d\cdot b^i=d\cdot b^e=0$, and a point $x$, we can similarly define a  straight frame, starting at  $x$, parallel to $d$, and that rotates $b^i$ to  $b^e$.  Trivially, the straight frames satisfy the conditions  $y'=d_1$ and $d_1'\cdot d_2=0$.
 
 The pair $(y, r):[\ell^i,\ell^e]\to \R^3\times SO(3)$, where $\ell^i<\ell^e$, with 
 $y(t)=\sigma (\cos(\alpha+t/\sigma) e_2 +\sin(\alpha+t/\sigma) e_3)+\hat y$, for some $\hat y\in\R^3$, $\alpha,\sigma\in \R$, $\sigma\neq0$, $d_1(t)=y'$, $d_2(t)= e_1$, and $d_3(t)=d_1(t)\wedge d_2(t)$ shall be called  a {\it circular frame}, starting at $y(\ell^i)$, orthogonal to $e_1$, and that rotates $d_1(\ell^i)$ to  $d_1(\ell^e)$. If convenient, instead of specifying the starting point $y(\ell^i)$ we may specify the ending point $y(\ell^e)$. Given three generic unit vectors $b, d^i, d^e$ such that $b\cdot d^i=b\cdot d^e=0$, and a point $x$, we can similarly define a  circular frame, starting at $x$, orthogonal to $b$, and that rotates $d^i$ to  $d^e$. Clearly, the circular frames also satisfy the conditions  $y'=d_1$ and $d_1'\cdot d_2=-d_1\cdot d_2'=0$.
 
It is also convenient to denote by $S_{PQ}$ the segment whose endpoints are $P$ and $Q$.

Given these definitions we prove the proposition by first assuming $\o y\ne 0$.
Let 
$$
\delta:=\frac1{12}\,{\min\{|\o y|, \ell-|\o y|\}}.
$$
We define $(y,r)$ in several steps.
\begin{enumerate}
\item If $e_1$ is not orthogonal to the segment $S_{0\o y}$, let $(y^0,r^0):[0,\delta]\to \R^3\times SO(3)$ be a circular frame starting at $0$, orthogonal  to $e_2$, and that rotates $e_1$ to a unit vector ${\o d}_1^0:=(y^0)'(\delta)$ orthogonal to $S_{0\o y}$.
We  set $P^0:=y^0(\delta)$ and $\o d_2^0:=e_2$.
\item If $e_1$ is orthogonal to the segment $S_{0\o y}$, let $(y^0,r^0):\{0\}\to \R^3\times SO(3)$, with $y^0(0)=0$, $r^0(0)=I$, and let   $P^0:=0$, $\o d_1^0:=e_1$, and $\o d_2^0:=e_2$.
\item If $\o d_1\ne -\o d_1^0$, let $(y^\ell,r^\ell):[\ell-\delta,\ell]\to \R^3\times SO(3)$ be a map such that
$d_1^\ell(\ell-\delta)=-\o d_1^0$, $d_2^\ell(\ell-\delta)=\o d_2^0$, $y^\ell(\ell)=\o y$, and $r^\ell(\ell)=\o r$.
This can be achieved in the following way: let $a$ be a unit vector such that $a\cdot \o d_1=a\cdot \o d_1^0=0$; we glue together
a straight frame parallel to $-\o d_1^0$ that rotates $\o d_2^0$ to $a$, with a circular frame orthogonal to $a$ that rotates $-\o d_1^0$ to $\o d_1$, and finally with a straight frame ending at $\o y$, parallel to $\o d_1$ and that rotates $a$ to $\o d_2$.
We set $P^\ell:=y^\ell(\ell-\delta)$. 
\item If $\o d_1=-\o d_1^0$, let $(y^\ell, r^\ell):\{\ell\}\to \R^3\times SO(3)$, with $y^\ell(\ell)=\o y$, $r^\ell(\ell)=\o r$, and let   $P^\ell:=\o y$ and $\o d_2^\ell:=\o d_2$.
\item Let
$$
s^0:=\{P^0+t \o d_1^0: \ t\in (0,+\infty)\}
\quad \mbox{and}\quad
s^\ell:=\{P^\ell+t \o d_1^0: \ t\in (0,+\infty)\},
$$
and note that the distance between $s^0$ and $s^\ell$ is larger than $10\delta$ and smaller than $\ell-10\delta$.
Let now $Q^0\in s^0$ and $Q^\ell\in s^\ell$ be such that the segment $S_{Q^0Q^\ell}$ is orthogonal to
$\o d_1^0$. The length of the curve obtained by glueing together the curve $y^0$, the segment $S_{P^0Q^0}$, the segment $S_{Q^0Q^\ell}$,  the segment $S_{Q^\ell P^\ell}$, and the curve $y^\ell$, has a minimum value that is at most $\ell-6\delta$.
Therefore, we can choose $Q^0$ and $Q^\ell$ such that the total length of this curve
 is exactly equal to $\ell+(4-\pi)\delta$.
We denote by $d^p$ the unit vector parallel to $S_{Q^0Q^\ell}$ pointing towards~$Q^\ell$.
\item We are now in a position to define $(y,r)$. 
Let $b$ be a unit vector orthogonal to the plane containing the points $P^0$, $Q^0$, and $Q^\ell$ (note that $P^\ell$ belongs to this plane, too). We consider the following curves:
\begin{enumerate}

	\item[i)] Let $\eta^0$ be the distance between $P^0$ and $Q^0$ (note that $\eta^0>3\delta$) and let $\tilde \delta^i:=\delta$ if case (1) holds, and $\tilde \delta^i:=0$ if case (2) holds. Let $(y^i,r^i):[\tilde \delta^i, \tilde \delta^i+\eta^0-\delta] \to \R^3\times SO(3)$
be a straight frame starting at $P^0$, parallel to $\o d_1^0$, and that rotates $\o d_2^0$ to $b$.

	\item[ii)] Let $\eta^\ell$ be the distance between $P^\ell$ and $Q^\ell$ (note that $\eta^\ell>\delta$) and let $\tilde \delta^e:=\delta$ if case (3) holds, and $\tilde \delta^e:=0$ if case (4) holds. Let $(y^e,r^e):[\ell-\tilde \delta^e-\eta^\ell+\delta, \ell-\tilde \delta^e] \to \R^3\times SO(3)$ be a straight frame ending at $P^\ell$, parallel to $-\o d_1^0$, and that rotates $b$ to $\o d_2^0$ in case (3) and to $\o d_2^\ell$ in case~(4).
	
	\item[iii)] Let $(y^{ci},r^{ci}):[\tilde \delta^i+\eta^0-\delta, \tilde \delta^i+\eta^0-\delta+\pi\delta/2] \to \R^3\times SO(3)$ be a circular frame starting at $y^i(\tilde \delta^i+\eta^0-\delta)$, orthogonal to $b$, and that rotates $\o d_1^0$ to~$d^p$. 
    
	\item[iv)] Let $(y^{ce},r^{ce}):[\ell-\tilde \delta^e-\eta^\ell+\delta-\pi\delta/2,\ell-\tilde \delta^e-\eta^\ell+\delta] \to \R^3\times SO(3)$ be a circular frame ending at $y^e(\ell-\tilde \delta^e-\eta^\ell+\delta)$, orthogonal to $b$, and that rotates $d^p$ to $-\o d_1^0$.
	 
	\item[v)] Let $(y^p,r^p):[\tilde \delta^i+\eta^0-\delta+\pi\delta/2, \ell-\tilde \delta^e-\eta^\ell+\delta-\pi\delta/2] \to \R^3\times SO(3)$ be a straight frame, starting at $y^{ci}(\tilde \delta^i+\eta^0-\delta+\pi\delta/2)$, with $(r^p(t))^T=(d^p\,|\,b\,|\,d^p\wedge b)$.
    							
\end{enumerate}
We define $(y,r):[0,\ell]\to\R^3\times SO(3)$ as the function equal to $(y^0,r^0)$, $(y^i,r^i)$, $(y^{ci},r^{ci})$, $(y^p,r^p)$, $(y^{ce},r^{ce})$, $(y^e,r^e)$, and $(y^\ell,r^\ell)$ on the respective domains.
It is easy to check that $y$ and $r$ have the desired regularity and satisfy all the conditions in the definition on $\mathcal A_0$.
\end{enumerate}

If $\o y= 0$, one can consider first the map $(y^a,r^a):[3\ell/4,\ell]\to\R^3\times SO(3)$ defined by $(y^a(t), r^a(t))=((t-\ell) \o d_1,\o r)$  and then repeat the previous argument
with $3\ell/4$ in place of $\ell$, and with  $y^a(3\ell/4)=-\ell/4 \o d_1$ in place of $\o y$.
\end{proof}

The next lemma shows that the compactness result \cite[Lemma~2.1]{FrHoMoPa} remains true under our set of 
boundary conditions.

\begin{lemma}\label{compactness}
Let $(y_{\e})$ be a sequence of scaled isometries such that $y_\e\in \AA_\e^s$ for every $\e>0$ and
$$
\sup_{\e} J_{\e}(y_{\e}) < \infty.
$$
Then, up to a subsequence, there exists $(y,r)\in \AA_0$
such that
\begin{equation}
\label{conv}
y_{\e}\weak y \mbox{ in }W^{2,2}(S; \R^3),\qquad
\D_{\e} y_{\e} \weak (d_1\, |\, d_2)\mbox{ in }W^{1,2}(S; \R^{3\times 2}),
\end{equation}
and
$$
\Pi_{y_{\e},\e} \weak
\begin{pmatrix}
d'_1\cdot d_3 & d_2'\cdot d_3
\\
d_2'\cdot d_3 & \gamma
\end{pmatrix}
\mbox{ in }L^{2}(S; \R^{2\times 2}_{\sym})
$$
with $\gamma\in L^2(S)$. 
\end{lemma}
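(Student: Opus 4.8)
\emph{Overall strategy.} The plan is to extract everything \emph{except} the four boundary identities defining $\AA_0$ directly from the compactness result \cite[Lemma~2.1]{FrHoMoPa}, which is stated and proved \emph{without} boundary conditions, and only afterwards to verify that the constraints in \eqref{Aepss} survive in the limit, yielding $y(0)=0$, $r(0)=I$, $y(\ell)=\o y$ and $r(\ell)=\o r$. This last point is, in my view, the heart of the matter, although it is elementary: the boundary conditions in \eqref{Aepss} defining $\AA_\e^s$ are prescribed as \emph{traces} of $y_\e$ and of the scaled gradient $\D_\e y_\e$ on the short sides of $S$, and the argument works precisely because those trace operators are continuous for the weak topologies in which $y_\e$ and $\D_\e y_\e$ converge, so no estimate beyond those already established in \cite{FrHoMoPa} is needed.

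\emph{Step 1: the interior.} First I would apply \cite[Lemma~2.1]{FrHoMoPa}: since $y_\e\in W^{2,2}_{\rm iso,\e}(S;\R^3)$ and $\sup_\e J_\e(y_\e)<\infty$, it produces a (not relabelled) subsequence and a pair $(y,r)$ with $y\in W^{2,2}(I;\R^3)$ and $r^T=(d_1|d_2|d_3)\in W^{1,2}(I;SO(3))$, satisfying $y'=d_1$ and $d_1'\cdot d_2=0$, for which \eqref{conv} and the asserted weak convergence of $\Pi_{y_\e,\e}$ in $L^2(S;\R^{2\times2}_{\sym})$ hold, the limit being the symmetric matrix field with diagonal entries $d_1'\cdot d_3$ and some $\gamma\in L^2(S)$ and off-diagonal entry $d_2'\cdot d_3$. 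In particular $y$ and all $d_i$ are independent of $x_2$; this is part of the cited statement, and for $y$ and $d_1$ it also follows quickly from the fact that the weak convergence $\D_\e y_\e\weak(d_1|d_2)$ in $W^{1,2}(S;\R^{3\times2})$ keeps $\e^{-1}\d_2 y_\e$ bounded in $W^{1,2}(S;\R^3)$, so $\d_2 y_\e=\e(\e^{-1}\d_2 y_\e)\to0$ in $W^{1,2}(S;\R^3)$ and $\d_2\d_1 y_\e=\e\,\d_1(\e^{-1}\d_2 y_\e)\to0$ in $L^2(S;\R^3)$, forcing $\d_2 y=0$ and $\d_2 d_1=0$. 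At this stage $(y,r)$ meets every requirement in \eqref{AA0} apart, possibly, from the boundary conditions.

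\emph{Step 2: the boundary conditions.} Then I would pass the boundary conditions to the limit. On the short side $\{0\}\times(-\tfrac12,\tfrac12)$: by the compact Sobolev embedding $W^{2,2}(S)\embed C^0(\o S)$ (valid since $S\subset\R^2$) one has $y_\e\to y$ uniformly on $\o S$, and since $y_\e(0,0)=0$ for every $\e$ while $y$ is independent of $x_2$, this gives $y(0)=0$. For the frame, $\d_1 y_\e\weak d_1$ and $\e^{-1}\d_2 y_\e\weak d_2$ in $W^{1,2}(S;\R^3)$; composing with the bounded (hence weakly continuous) trace operator from $W^{1,2}(S;\R^3)$ into $L^2$ of that side, the traces of $\d_1 y_\e$ and of $\e^{-1}\d_2 y_\e$ converge weakly in $L^2$ to the traces of $d_1$ and $d_2$. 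As these traces equal the constants $e_1$ and $e_2$ for every $\e$, and $d_1$, $d_2$ are independent of $x_2$, it follows that $d_1(0)=e_1$, $d_2(0)=e_2$, hence $d_3(0)=d_1(0)\wedge d_2(0)=e_3$ and $r(0)=I$. Running the identical argument on $\{\ell\}\times(-\tfrac12,\tfrac12)$ gives $y(\ell)=\o y$, $d_1(\ell)=\o d_1$ and $d_2(\ell)=\o d_2$, whence $r(\ell)^T=(\o d_1|\o d_2|\o d_3)=\o r^T$, i.e.\ $r(\ell)=\o r$. Therefore $(y,r)\in\AA_0$, which would conclude the proof.
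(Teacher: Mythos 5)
Your proposal is correct and follows essentially the same route as the paper: invoke the unconstrained compactness result of \cite[Lemma~2.1]{FrHoMoPa} for everything except the boundary identities, then recover $y(0)=0$, $r(0)=I$, $y(\ell)=\o y$, $r(\ell)=\o r$ from the weak convergences via the compact embedding $W^{2,2}(S;\R^3)\embed C(\o S)$ and the weak continuity of the trace operators on the short sides. The only detail the paper makes explicit that you leave implicit is that the prescribed condition $y_\e(0,0)=0$ ensures the sequence is bounded in $W^{2,2}$ without subtracting additive constants, so the cited lemma applies to $(y_\e)$ itself.
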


\begin{proof}[Proof of Lemma~\ref{compactness}]
The proof is an easy adaptation of that of \cite[Lemma~2.1]{FrHoMoPa} with 
slight modifications due to the presence of boundary conditions. 
In particular, estimates (3.2) in \cite{FrHoMoPa} imply that the sequence $(y_\e)$ is uniformly 
bounded in $W^{2,2}(S;\R^3)$ (without any additive constant). 
In addition, the boundary conditions for $y$ and $r$ are satisfied owing to \eqref{conv}, the continuity of traces,  
and the compact embedding of $W^{2,2}(S;\R^3)$ in 
$C(\overline{S})$ (in fact, in $C^{0,\lambda}(\overline{S})$ for every $\lambda\in(0,1)$),
which implies uniform convergence on $\overline{S}$ of weakly converging sequences in $W^{2,2}(S;\R^3)$.
More precisely, the conditions $y(0)=0$ and $y(\ell)=\o y$ follow from passing to the limit in $y_\e(0,0) =0$ and
$y_\e(\ell,0)=\o y$, respectively, using that $y_\e\weak y$ in $W^{2,2}(S;\R^3)$, hence uniformly on $\overline{S}$.
The equality $y' = r^T e_1$ is a consequence of the definition of $r$ and \eqref{conv}. The condition $y'(0)=e_1$ follows from $\d_1 y_\e(0, x_2) = e_1$, the fact that $\d_1 y_\e\weak y'$ in $W^{1,2}(S;\R^3)$, and the continuity of the trace. Similarly, the condition $d_2(0)=e_2$ follows from $\d_2 y_\e(0, x_2) = \e e_2$, the fact that $\frac{\d_2 y_\e}{\e}\weak d_2$ in $W^{1,2}(S;\R^3)$, and the continuity of the trace.
This implies that $d_3(0)=d_1(0)\times d_2(0)=e_3$, hence $r(0) = I$. Analogously, one deduces that $r(\ell)=\o r$.
\end{proof}

The following theorem is the main result of the paper. It proves that the functionals $J_\e$ defined in \eqref{Jeps} with domain $\AA_\e^s$ (see \eqref{Aepss}) 
$\Gamma$-converge to the functional
$$
E(y,d_1,d_2,d_3):=\int_{0}^{\ell}\o Q(d_1'\cdot d_3, d_2'\cdot d_3)\,ds
$$
with domain $\AA_0$ (see \eqref{AA0}), where $\o Q$ defined in \eqref{barQ}.

\begin{theorem}\label{Gamma}
As $\e\to 0$, the functionals $J_\e$, with domain $\AA^s_\e$,  $\Gamma$-converge to the limit functional $E$,  with domain $\AA_0$, in the following sense: 
\begin{enumerate}
\item[(i)] {\rm (liminf inequality)} for every $(y,r)\in \AA_0$ and every sequence $(y_{\e})$ such that $y_\e\subset \AA_\e^s$ for every $\e>0$, $y_{\e}\weak y$ in $W^{2,2}(S; \R^3)$, and $\D_{\e} y_{\e} \weak (d_1\, |\, d_2)$ in $W^{1,2}(S; \R^{3\times 2})$,
we have that
$$
\liminf_{\e\to 0} J_\e(y_\e)\ge E(y,d_1,d_2,d_3);
$$
\item[(ii)] {\rm (recovery sequence)} for every $(y,r)\in \AA_0$ there exists a sequence $(y_{\e})$ such that
$y_{\e}\in \AA^s_\e$ for every $\e>0$, 
$y_{\e}\weak y$ in $W^{2,2}(S; \R^3)$, $\D_{\e} y_{\e} \weak (d_1\, |\, d_2)$  in $W^{1,2}(S; \R^{3\times 2})$, and
$$
\limsup_{\e \to 0} J_\e(y_\e)\le E(y,d_1,d_2,d_3).
$$
\end{enumerate}
\end{theorem}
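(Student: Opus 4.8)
\emph{The liminf inequality (i)} I would simply inherit from \cite{FrHoMoPa,longversion}: prescribing boundary conditions only shrinks the admissible classes, and the target density $\o Q(d_1'\cdot d_3,d_2'\cdot d_3)$ depends on the limit frame only through the strains. Concretely, given $(y,r)\in\AA_0$ and $(y_\e)\subset\AA_\e^s$ with $y_\e\weak y$ in $W^{2,2}(S;\R^3)$ and $\De y_\e\weak(d_1|d_2)$ in $W^{1,2}(S;\R^{3\times2})$, I would pass to a subsequence realizing the liminf, which may be assumed finite; then the $\Pi_{y_\e,\e}$ are bounded in $L^2(S;\R^{2\times2}_{\sym})$, so by Lemma~\ref{compactness} a further subsequence satisfies $\Pi_{y_\e,\e}\weak M_*$ with $(M_*)_{11}=d_1'\cdot d_3$, $(M_*)_{12}=d_2'\cdot d_3$ and $(M_*)_{22}\in L^2(S)$. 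Since $\det\Pi_{y_\e,\e}\equiv 0$, the relaxation lemma \cite[Lemma~3.1]{FrHoMoPa} gives $\liminf_\e J_\e(y_\e)\ge\int_S\big(|M_*|^2+2|\det M_*|\big)\,dx$; bounding the integrand on each slice $\{x_1\}\times(-\tfrac12,\tfrac12)$ from below by $\o Q\big(d_1'(x_1)\cdot d_3(x_1),d_2'(x_1)\cdot d_3(x_1)\big)$ via the definition \eqref{barQ-min} of $\o Q$, and integrating, yields $\liminf_\e J_\e(y_\e)\ge E(y,d_1,d_2,d_3)$.

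\emph{The recovery sequence (ii)} I would build following the scheme of \cite{FrHoMoPa,longversion}, but with the relaxation step replaced by its boundary-preserving refinement. Given $(y,r)\in\AA_0$, set $\mu:=d_1'\cdot d_3$, $\tau:=d_2'\cdot d_3\in L^2(0,\ell)$, and choose a measurable $\gamma$ attaining the minimum in \eqref{barQ-min}, so that $M:=\begin{pmatrix} \mu & \tau \\ \tau & \gamma \end{pmatrix}$ satisfies $|M|^2+2|\det M|=\o Q(\mu,\tau)$ a.e.\ and $\int_0^\ell(|M|^2+2|\det M|)\,ds=E(y,d_1,d_2,d_3)$. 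By Proposition~\ref{haupt} --- the refinement of \cite[Lemma~3.1]{FrHoMoPa} built on the density results of \cite{framed} for framed curves with prescribed endpoints --- I obtain smooth $M^j:[0,\ell]\to\R^{2\times2}_{\sym}$ with $\det M^j\equiv0$, $(M^j)_{11}\weak\mu$ and $(M^j)_{12}\weak\tau$ in $L^2(0,\ell)$, $\int_0^\ell|M^j|^2\,ds\to E(y,d_1,d_2,d_3)$, and such that the framed curve $(y^j,r_j)$ obtained by integrating $(d^j_1)'=(M^j)_{11}d^j_3$, $(d^j_2)'=(M^j)_{12}d^j_3$, $(d^j_3)'=-(M^j)_{11}d^j_1-(M^j)_{12}d^j_2$, $(y^j)'=d^j_1$, with $d^j_i(0)=e_i$ and $y^j(0)=0$, lies in $\AA_0$, i.e.\ also satisfies $y^j(\ell)=\o y$ and $r_j(\ell)=\o r$; moreover one can take $M^j\equiv0$ on two intervals $[0,\delta_j]$ and $[\ell-\delta_j,\ell]$, so that there $(y^j,r_j)$ is affine with frame $(e_1|e_2)$, resp.\ $(\o d_1|\o d_2)$. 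The above flow keeps $(d^j_1,d^j_2,d^j_3)$ orthonormal (hence $r_j^T=(d^j_1|d^j_2|d^j_3)\in W^{1,2}(I;SO(3))$) and enforces $(d^j_1)'\cdot d^j_2=0$; and by stability of this linear system under weak $L^2$ convergence of its coefficients, $(y^j,r_j)\to(y,r)$ uniformly on $[0,\ell]$, with $r_j\weak r$ in $W^{1,2}$.

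Next, for each fixed $j$ I would construct $(y^j_\e)_\e$ exactly as in \cite{FrHoMoPa,longversion}: the smooth framed curve $(y^j,r_j)$ together with the smooth field $M^j$ (with $\det M^j\equiv0$, $(M^j)_{11}=(d^j_1)'\cdot d^j_3$, $(M^j)_{12}=(d^j_2)'\cdot d^j_3$) determines a developable strip which, after a higher-order transversal correction, becomes an exact $W^{2,2}$ isometry $u_\e:S_\e\to\R^3$ with $\D u_\e(\cdot,0)=(d^j_1|d^j_2)$ and $\Pi_{u_\e}(\cdot,0)=M^j$ on $(0,\ell)$, well defined for $\e<\e_j$ (so that the rulings do not cross within width $\e$). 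Since $M^j\equiv0$ and the frame is affine near the endpoints, choosing the ruling field there constant equal to $e_2$, resp.\ $\o d_2$, makes $u_\e$ coincide, for $\e$ small, with $x\mapsto x_1e_1+x_2e_2$ near $x_1=0$ and with $x\mapsto\o y+(x_1-\ell)\o d_1+x_2\o d_2$ near $x_1=\ell$; the clamped affine conditions then hold exactly, so the rescaled map $y^j_\e(x_1,x_2):=u_\e(x_1,\e x_2)$ belongs to $\AA_\e^s$. The estimates of \cite{FrHoMoPa} then give, as $\e\to0$, $y^j_\e\to y^j$ in $W^{1,\infty}(S;\R^3)$ (in particular $y^j_\e\weak y^j$ in $W^{2,2}$, with a bound uniform in $\e$ and $j$), $\De y^j_\e\to(d^j_1|d^j_2)$ in $W^{1,2}(S;\R^{3\times2})$, and $\Pi_{y^j_\e,\e}\to M^j$ in $L^2(S;\R^{2\times2}_{\sym})$, whence $J_\e(y^j_\e)\to\int_0^\ell|M^j|^2\,ds$. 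Since the relevant weak topologies are metrizable on the bounded sets involved, a standard diagonal argument produces $\e(j)\to0$ such that $y_\e:=y^{j(\e)}_\e\in\AA_\e^s$, $y_\e\weak y$ in $W^{2,2}(S;\R^3)$, $\De y_\e\weak(d_1|d_2)$ in $W^{1,2}$, and $\limsup_{\e\to0}J_\e(y_\e)\le\lim_j\int_0^\ell|M^j|^2\,ds=E(y,d_1,d_2,d_3)$.

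\emph{Main obstacle.} The heart of the matter is Proposition~\ref{haupt}. The classical relaxation \cite[Lemma~3.1]{FrHoMoPa} relaxes the zero-Gauss-curvature constraint but gives no control over the endpoint data $y^j(\ell)$, $r_j(\ell)$ of the integrated frame --- here forced to equal the prescribed $\o y$, $\o r$ --- nor does it let the frame be flattened near the ends, as is needed for $u_\e$ to match the clamped affine boundary conditions on the short sides. Reconciling these requirements with the weak $L^2$ convergences $(M^j)_{11}\weak\mu$, $(M^j)_{12}\weak\tau$ and with the energy convergence $\int_0^\ell|M^j|^2\,ds\to E(y,d_1,d_2,d_3)$ is the delicate step, and it is where the density theory for framed curves preserving boundary conditions of \cite{framed} comes in. A secondary, purely technical point --- harmless thanks to the diagonalization --- is the $j$-dependent threshold $\e_j$ below which the ruled surface $u_\e$ is embedded.
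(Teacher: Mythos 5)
Your skeleton is the paper's: the liminf inequality is inherited from \cite{FrHoMoPa,longversion}, and the recovery sequence is built by choosing the optimal $\gamma$ in \eqref{barQ-min}, invoking the boundary-preserving relaxation of Proposition~\ref{haupt}, integrating the frame ODE \eqref{odde}, constructing ruled isometries whose rulings at the short sides are $e_2$ and $\o d_2$, rescaling, and diagonalizing. The one genuine discrepancy is the property you attribute to Proposition~\ref{haupt}: you assert that one can take $M^j\equiv 0$ on $[0,\delta_j]$ and $[\ell-\delta_j,\ell]$, so that $(y^j,r_j)$ is affine near the endpoints and $u_\e$ coincides with the clamped affine maps near the short sides. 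Proposition~\ref{haupt} gives no such thing: it produces $M^j=\la_j\,p_j\otimes p_j$ with $p_j=\u e_1$ near $\d I$, but $\la_j$ is smooth and, in the construction (Lemmas~\ref{nochmal1}--\ref{schrittz}, which deliberately push $|A_{13}|,|A_{23}|$ above a positive threshold), bounded away from zero. So near the ends $M^j=\la_j\,\u e_1\otimes\u e_1\neq 0$: the twist vanishes there but the bending does not, and the recovery maps are \emph{not} affine in any neighborhood of the short sides. As written, your boundary-matching step rests on an unproved strengthening of the key proposition.

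The overclaim is not fatal, because affineness near the ends is more than the boundary conditions require: membership in $\AA_\e$ only prescribes traces on the short sides. The paper's argument uses exactly what Proposition~\ref{haupt} does deliver: since $p_j=\u e_1$ near $\d I$, one has $\Phi_j(0,s)=(0,s)$, $\Phi_j(\ell,s)=(\ell,s)$, $p_j'(0)=p_j'(\ell)=0$ and $(M^j)_{12}=0$ at the ends, hence $(d_2^j)'(0)=(d_2^j)'(\ell)=0$; combined with admissibility of $A_{M^j}$ (which gives $r_j(0)=I$, $r_j(\ell)=\o r$, $y^j(\ell)=\o y$), this yields $u_j(0,x_2)=x_2e_2$, $u_j(\ell,x_2)=\o y+x_2\o d_2$, $\nabla u_j(0,\cdot)=(e_1|e_2)$ and $\nabla u_j(\ell,\cdot)=(\o d_1|\o d_2)$, i.e.\ $u_j\in\AA_\e$, even though the strip bends right up to the ends. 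If you insist on your variant, you would have to prove the stronger relaxation statement yourself (cut $M$ off near $\d I$, restore admissibility via Lemma~\ref{korrektur} on an interior nondegenerate interval, and redo the laminate construction away from the ends, checking that weak convergence and $\FF(M^j)\to\FF(M)$ survive); that is additional work not covered by Proposition~\ref{haupt} as stated, and it is unnecessary. The rest of your outline (liminf via compactness plus the relaxation lower bound and slice-wise minimality of $\o Q$, the identities \eqref{betatheta}, the $\e_j$-threshold for injectivity of $\Phi_j$, and the diagonal argument) matches the paper.
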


\begin{remark}\label{nonempty}
By combining Proposition~\ref{A0-ne} and Theorem~\ref{Gamma}\,--\,(ii) we deduce, in particular, that $\AA^s_\e\neq\varnothing$, hence $\AA_\e\neq\varnothing$
for $\e>0$ small enough. For the boundary conditions of a M\"obius band an explicit construction was provided by Sadowsky in \cite{Sadowsky}, see also \cite{HF2015b}.
\end{remark}

The liminf inequality can be proved exactly as in \cite{FrHoMoPa}. We postpone to Section~\ref{RS} the proof of the existence of a recovery sequence, which is based on the approximation results of the next section.

\section{Smooth approximation of infinitesimal ribbons}\label{UB}

The first step in the construction of the recovery sequence
consists in showing that it is enough to
construct ribbons with finite width 
starting from well-behaved
infinitesimally narrow ribbons.
At the level of the infinitesimal
ribbons we perform several approximation steps in which
we iteratively approximate and correct
the approximating sequences on ever finer scales.
It is essential to correct
the boundary conditions at the end of each step.
The procedure is not trivial, because
the correction process could spoil other essential properties.
For this reason, one part of the modifications aims
at making the sequence robust enough to be stable under the corrections.

We set $\ell=1$ and $I = (0, 1)$. The letter $I$ will also denote the identity matrix.
All results in this section remain true with obvious changes
for intervals of arbitrary length $\ell\in (0, \infty)$.
We define $\A\subset\R^{3\times 3}$
to be the span of $e_1\otimes e_3 - e_3\otimes e_1$
and $e_2\otimes e_3 - e_3\otimes e_2$.
For a given $A\in L^2(I;\A)$ we define $\ra: I\to SO(3)$ to be the solution
of the ODE system
$$
\ra' = A\,\ra \quad \text{ in } I
$$
with initial condition $\ra(0) = I$.
We will call a map $A\in L^2(I;\A)$ {\it nondegenerate}
on a measurable set $J\subset I$ if $J\cap\{A_{13}\neq 0\}$ has positive measure.
When the set $J$ is not specified, it is understood
to be $J = I$.

For all $A\in L^2(I;\A)$ we define
$$
\Gamma_{\!A} := \int_0^1 \ra^T(t)e_1\, dt.
$$
We fix some nondegenerate $A^{(0)}\in L^2(I;\A)$
and set $\o r = r_{\!A^{(0)}}(1)$ and 
$\o\Gamma = \Gamma_{\!A^{(0)}}$.
A map $A\in L^2(I;\A)$ is said to be {\it admissible} if it satisfies 
\begin{equation}\label{defvonM}
\ra(1) = \o r \quad \text{ and } \quad
\Gamma_{\!A} = \o\Gamma.
\end{equation}

Note that, if $(y,r)\in\AA_0$ (see \eqref{AA0}), then $r=\ra$ with $A$ given by
$$
A= \begin{pmatrix}
0 & 0 & d_1'\cdot d_3
\\
0 & 0 & d_2'\cdot d_3
\\
-d_1'\cdot d_3 & -d_2'\cdot d_3 & 0
\end{pmatrix}.
$$ 
In particular, $A$ is admissible in the sense of \eqref{defvonM} with $\o r$ given by the boundary condition at $\ell$ and
$\o\Gamma=\o y$.

For $M\in\R^{2\times 2}_{\sym}$ we define $A_M\in\A$ by setting
$(A_M)_{13} = M_{11}$ and $(A_M)_{23} = M_{12}$.
For $M\in L^2(I;\R^{2\times 2}_{\sym})$ we introduce the functional
$$
\FF(M):= \int_0^1( |M|^2 + 2|\det M|)\, dt.
$$
In \cite[Lemma~3.1]{FrHoMoPa} it has been proved that for every $M\in L^2(I;\R^{2\times 2}_{\sym})$ there exists a sequence $(M_n)\subset L^2(I;\R^{2\times 2}_{\sym})$
such that $\det M_n=0$ for every $n$, $M_n\weak M$ weakly in $L^2(I;\R^{2\times 2}_{\sym})$, and $\FF(M_n)\to\FF(M)$.
The main purpose of this section is to prove the following refinement of this result.

\begin{proposition}\label{haupt}
Let $M\in L^2(I;\R^{2\times 2}_{\sym})$
be such that $M_{11}\neq 0$ on a set of positive measure and $A_M$ is admissible.
Then
there exist $\la_n\in C^{\infty}(\o I)$
and $p_n\in C^{\infty}(\o I;\S^1)$ 
such that, setting $M_n:=\la_n p_n\otimes p_n$, we have
\begin{enumerate}[(i)]
\item $p_n\cdot \u e_1 > 0$ everywhere on $\o I$, 
$p_n = \u e_1$ near $\d I$, and $A_{M_n}$ is admissible for every~$n$;
\item \label{haupt-ii} 
$M_n\weak M$ weakly
in $L^2(I;\R^{2\times 2}_{\sym})$;
\item $\FF(M_n) \to \FF(M)$.
\end{enumerate}
\end{proposition}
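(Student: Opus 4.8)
The plan is to prove Proposition~\ref{haupt} by an iterated approximation–correction scheme, following the spirit of the unconstrained relaxation \cite[Lemma~3.1]{FrHoMoPa} but at every stage restoring the two constraints encoded in \eqref{defvonM}, namely $r_{\!A_{M_n}}(1)=\o r$ and $\Gamma_{\!A_{M_n}}=\o\Gamma$. I would begin from the known unconstrained result: there is a sequence $(N_k)\subset L^2(I;\R^{2\times2}_{\sym})$ with $\det N_k=0$, $N_k\weak M$, and $\FF(N_k)\to\FF(M)$; moreover, by the standard structure of the construction (rescaled laminates), each $N_k$ can be taken smooth and of the rank-one form $N_k=\la_k\, p_k\otimes p_k$ with $p_k\in C^\infty(\o I;\S^1)$. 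Since $M_{11}\neq 0$ on a set of positive measure and $N_k\weak M$, for $k$ large the first component $(N_k)_{11}=\la_k (p_k\cdot\u e_1)^2$ is nonzero on a set of positive measure, so $A_{N_k}$ is nondegenerate; after a small perturbation we may also arrange $p_k\cdot\u e_1>0$ everywhere and $p_k=\u e_1$ near $\d I$ (this last adjustment near the endpoints changes $N_k$ only on a short interval and hence perturbs $\FF$ and the weak limit negligibly).

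The heart of the argument is the correction of the two boundary/integral constraints without destroying $\det M_n=0$, the rank-one form, the convergences, or the energy. Here I would invoke the density results for framed curves preserving boundary conditions from \cite{framed}, as advertised in the introduction. Concretely: given the nondegenerate smooth $A_{N_k}$, whose associated frame $r_{\!A_{N_k}}(1)$ and whose $\Gamma_{\!A_{N_k}}$ are close to but not equal to $\o r$ and $\o\Gamma$, one localizes a small correction on a short subinterval $J_k\subset I$ away from $\d I$ on which $A_{N_k}$ is nondegenerate (such $J_k$ exists by nondegeneracy). On $J_k$ one replaces $A_{N_k}$ by a modified smooth admissible field $\widetilde A_k$, still of the form $A_{\widetilde N_k}$ with $\widetilde N_k=\widetilde\la_k\,\widetilde p_k\otimes\widetilde p_k$ and $\widetilde p_k\cdot\u e_1>0$, chosen so that $r_{\!\widetilde A_k}(1)=\o r$ and $\Gamma_{\!\widetilde A_k}=\o\Gamma$ exactly. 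The point is that the ``defect'' $(r_{\!A_{N_k}}(1)^{-1}\o r,\ \o\Gamma-\Gamma_{\!A_{N_k}})\in SO(3)\times\R^3$ tends to the identity as $k\to\infty$, so by an implicit–function/controllability argument on the (nonholonomic) frame system one can realize it with a correction whose $L^2$ size on $J_k$ is small; taking $|J_k|\to0$ slowly and controlling the local energy contribution $\int_{J_k}(|\widetilde N_k|^2+2|\det\widetilde N_k|)\,dt=\int_{J_k}\widetilde\la_k^2\,dt\to 0$ keeps $\FF(\widetilde N_k)\to\FF(M)$ and $\widetilde N_k\weak M$ intact. Setting $M_n:=\widetilde N_k$, $\la_n:=\widetilde\la_k$, $p_n:=\widetilde p_k$ gives all three conclusions.

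I would organize this as: (a) extract smooth rank-one $N_k$ from \cite[Lemma~3.1]{FrHoMoPa} with $\FF(N_k)\to\FF(M)$, $N_k\weak M$; (b) fix up the pointwise conditions $p_k\cdot\u e_1>0$ and $p_k=\u e_1$ near $\d I$, and check nondegeneracy for large $k$; (c) quantify the constraint defect and show it $\to0$; (d) apply the boundary-preserving density/controllability lemma of \cite{framed} on a shrinking nondegenerate subinterval to cancel the defect exactly while keeping the rank-one form and the sign condition; (e) verify that the localized correction perturbs the weak limit and $\FF$ by amounts going to $0$, and conclude by relabeling (a diagonal argument if needed).

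\textbf{Main obstacle.} The delicate point is step (d): one must perform the exact constraint correction \emph{while staying in the admissible rank-one class} $M=\la\,p\otimes p$ with $p\cdot\u e_1>0$, i.e.\ one is not free to pick an arbitrary $\A$-valued perturbation but only those of the special form $(A_M)_{13}=\la p_1^2$, $(A_M)_{23}=\la p_1 p_2$ — equivalently $(A_M)_{13}\neq0$ and $(A_M)_{23}^2=(A_M)_{13}\cdot(\text{second diagonal entry of }M)$, the zero-determinant relation. Checking that the frame map $A\mapsto(r_{\!A}(1),\Gamma_{\!A})$ restricted to this codimension-one, sign-constrained family is still a submersion near the relevant configurations — so that the defect can be annihilated by a small, smooth, compactly supported correction of this form with vanishing energy cost — is precisely where the input from \cite{framed} is needed and where the proof is genuinely nontrivial; I expect the bulk of the technical work, and the matching of scales between $|J_k|$, the defect size, and the energy error, to live here.
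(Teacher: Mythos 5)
Your overall strategy (an unconstrained laminate approximation followed by a localized correction based on the boundary-preserving results of \cite{framed}) is the same in spirit as the paper's, but the step you yourself single out as the main obstacle -- your step (d) -- is a genuine gap, and the way you frame it points in the wrong direction. The tool actually available, \cite[Theorem~3.2]{framed} (used here through Lemma~\ref{korrektur}), produces corrections $\h A_n$ lying in a \emph{fixed} finite-dimensional subspace of $\A$-valued maps supported on a \emph{fixed} set $J$ on which $A$ is nondegenerate, with $\h A_n\to0$; it gives no submersion property of $A\mapsto(r_{\!A}(1),\Gamma_{\!A})$ restricted to the rank-one, sign-constrained family, and no quantitative controllability on shrinking intervals $J_k$ with matched defect/amplitude/energy scales. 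So the statement your scheme hinges on is neither provided by the cited reference nor proved by you. The paper avoids the issue altogether: the correction is performed freely in $\A$ (only the two components $A_{13},A_{23}$ are involved), and the rank-one structure is recovered \emph{a posteriori}, because any $A$ with $A_{13}$ bounded away from zero is of the form $A_{\la p\otimes p}$ upon setting $\gamma_A:=A_{23}^2/A_{13}$, taking $p$ proportional to $(A_{13},A_{23})^T$ with the sign of $A_{13}$, and $\la:=A_{13}+\gamma_A$ (see \eqref{defpn}). The price is that the $(2,2)$-entry changes, so one must check $\gamma_{A^{(n)}}\to\gamma$ and the convergence of $\FF$; this is exactly why the paper first proves the preliminary approximations of Lemmas~\ref{nochmal0}--\ref{nochmal2} and the laminate construction of Lemma~\ref{schrittz}, which keep $|A^{(n)}_{13}|$ (and, at the intermediate stage, $|M_{12}|$) bounded away from zero.

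Your steps (a)--(b) also gloss over real work: the unconstrained relaxation \cite[Lemma~3.1]{FrHoMoPa} does not hand you smooth rank-one approximants with $p_k\cdot\u e_1>0$ uniformly, $p_k=\u e_1$ near $\d I$, and $(N_k)_{11}$ bounded below; arranging this requires the careful eigenvector selection in Lemma~\ref{schrittz} (possible only after $M_{12}$ has been pushed away from zero) and the mollification of the argument of $p$ and of $\la$ carried out in the proof of Proposition~\ref{haupt}, not just ``a small perturbation''. Finally, note that in the paper the correction is supported in a fixed interval $J\subset(\tfrac14,\tfrac34)$ on which the piecewise-constant approximant is constant, so smoothness, the sign condition, and $p_n=\u e_1$ near $\d I$ survive the correction trivially, and the convergence $\gamma_{A^{(n)}}-\gamma_{A_{\t M_n}}\to0$ follows from the uniform smallness of $\h A^{(n)}$ together with the lower bound on $A^{(n)}_{13}$ -- no matching of scales between defect size, interval length, and energy error is needed.
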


The next lemma is the key tool that allows us to correct the boundary conditions at each approximation step.

\begin{lemma}\label{korrektur}
Let $A\in L^2(I;\A)$ and assume
that there is a set $J\subset I$ of positive measure
such that $A$ is nondegenerate on $J$. Then every $L^2$-dense subspace $\t E$
of
$$
\{\h A\in L^2(I;\A) : \ \h A=0 \text{ a.e.\ in }I\setminus J\}
$$
contains a finite dimensional subspace $E$
such that, whenever $A_n\in L^2(I; \A)$ 
converge to $A$ weakly in $L^2(I; \A)$, then
there exist $\h A_n\in E$ converging to zero in $E$
and such that
$$
r_{\! A_n + \h A_n}(1) = \ra(1) \quad \text{ and } \quad\Gamma_{\! A_n + \h A_n} = \Gamma_{\!A}
$$
for every $n$ large enough.
\end{lemma}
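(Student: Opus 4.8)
\textbf{Proof plan for Lemma~\ref{korrektur}.}

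The heart of the matter is to set up a finite-dimensional perturbation that can adjust, at the same time, the end rotation $r_{A}(1)\in SO(3)$ and the averaged tangent $\Gamma_{A}\in\R^3$, i.e.\ a total of $\dim SO(3)+\dim\R^3 = 6$ constraints. The plan is to realize this through an implicit-function-type argument, paying attention to the fact that the ``target manifold'' $SO(3)\times\R^3$ is not linear, so one has to move along it smoothly.

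First I would introduce the map that records the two end-quantities. For $\h A\in L^2(I;\A)$ set $F(\h A):=\big(r_{A+\h A}(1)\,r_A(1)^{-1},\ \Gamma_{A+\h A}\big)\in SO(3)\times\R^3$, and let $\Phi(\h A):=\big(\log(r_{A+\h A}(1)\,r_A(1)^{-1}),\ \Gamma_{A+\h A}-\Gamma_A\big)$, which is well defined and smooth (indeed real-analytic) on a small $L^2$-ball around $0$ and satisfies $\Phi(0)=0$; here $\log$ is the principal matrix logarithm near $I$, with values in $\so(3)$. Standard ODE dependence on parameters shows that $\h A\mapsto r_{A+\h A}(1)$ and $\h A\mapsto\Gamma_{A+\h A}$ are $C^\infty$ from $L^2(I;\A)$ into $SO(3)$ and $\R^3$ respectively (the flow depends smoothly — even analytically — on the $L^1$, a fortiori $L^2$, perturbation of the coefficient matrix). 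Hence $\Phi$ is $C^1$ near $0$.

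The key step is to show that the differential $D\Phi(0)$, restricted to the subspace $\{\h A\in L^2(I;\A):\h A=0\text{ a.e.\ on }I\setminus J\}$, is surjective onto $\so(3)\times\R^3\cong\R^6$. The derivative is computed by the standard first-variation formula: writing $r=r_A$, one gets $\partial_{\h A}r_{A+\h A}(1)\big|_0 = r(1)\int_0^1 r(t)^{-1}\h A(t)\,r(t)\,dt$ (so the $SO(3)$-component of $D\Phi(0)\h A$ is $\int_0^1 r^{-1}\h A\,r\,dt\in\so(3)$), and $D\Phi(0)\h A$ in the $\R^3$-component is the corresponding first variation of $\Gamma_{A+\h A}=\int_0^1 r_{A+\h A}^{-1}(t)e_1\,dt$, namely $-\int_0^1 r(t)^{-1}\Big(\int_0^t r(s)^{-1}\h A(s)\,r(s)\,ds\Big)\,r(t)^{-1}e_1\,dt$ — an iterated integral, linear in $\h A$. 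Surjectivity of this pair of linear maps onto $\R^6$ is exactly where nondegeneracy of $A$ on $J$ is used: if it failed, there would be a fixed pair $(\Omega,v)\in\so(3)\times\R^3$, not both zero, annihilating every admissible $\h A$ supported in $J$; expanding the pairing and using that $\h A$ ranges over all $\A$-valued functions supported on $J$, this would force a pointwise identity on $J$ relating the two columns $e_1\otimes e_3-e_3\otimes e_1$ and $e_2\otimes e_3-e_3\otimes e_2$ of $\A$ — via quantities built from $r$, $\Omega$, $v$ — to vanish. The factor multiplying the ``$A_{13}$-direction'' can be shown to be, up to a nonvanishing scalar, a quantity that is not identically zero on $\{A_{13}\neq0\}\cap J$, which has positive measure; this yields the contradiction. (This is the analogue, in the closed/framed-curve setting, of the nondegeneracy hypotheses in the density theorems of \cite{framed}, from which one may alternatively quote the surjectivity directly.)

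Once surjectivity of $D\Phi(0)$ on that subspace is known, I pick six elements of the given $L^2$-dense subspace $\t E$ whose images under $D\Phi(0)$ still span $\R^6$ — possible because $D\Phi(0)$ is continuous on $L^2$ and $\t E$ is dense — and let $E:=\Span$ of these six elements, a finite-dimensional subspace of $\t E$. Restricted to $E$, the map $\Phi$ is a $C^1$ map from (a ball in) $\R^6\cong E$ to $\R^6$ with invertible differential at $0$, hence a local diffeomorphism near $0$. Now given $A_n\weak A$ in $L^2(I;\A)$, the perturbed data $\Phi|_E$ gets replaced by $\Phi_n(\h A):=\big(\log(r_{A_n+\h A}(1)\,r_A(1)^{-1}),\ \Gamma_{A_n+\h A}-\Gamma_A\big)$; weak $L^2$ convergence $A_n\weak A$ gives $r_{A_n}(1)\to r_A(1)$ and $\Gamma_{A_n}\to\Gamma_A$ (the flow map is weak-to-strong continuous, since the solution operator factors through a compact Volterra-type integral and $\int_0^\cdot(A_n-A)\to0$ uniformly by weak convergence against the indicator), so $\Phi_n\to\Phi|_E$ in $C^1$ on the ball, uniformly in $n$. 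By the quantitative inverse function theorem (or a simple degree/Brouwer argument), for $n$ large there is a unique $\h A_n\in E$ near $0$ with $\Phi_n(\h A_n)=0$, i.e.\ $r_{A_n+\h A_n}(1)=r_A(1)$ and $\Gamma_{A_n+\h A_n}=\Gamma_A$; moreover $\Phi_n(0)=\Phi_n(0)-\Phi(0)\to0$ forces $\h A_n\to0$ in $E$. This is exactly the assertion of the lemma.

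The main obstacle is the surjectivity of $D\Phi(0)$ in Step~3 — in particular disentangling the coupled $SO(3)\times\R^3$ constraint and verifying that the single scalar nondegeneracy assumption ``$A_{13}\neq0$ on a positive-measure subset of $J$'' suffices to kill \emph{all six} obstructions simultaneously. Everything else (smooth dependence of flows on coefficients, weak-to-strong continuity of the solution operator, the passage from $\Phi$ to $\Phi_n$, and the quantitative inverse function theorem) is routine; it is the algebraic/measure-theoretic unwinding of the annihilator condition that requires the real care, and where I expect to lean on the framed-curve density machinery of \cite{framed}.
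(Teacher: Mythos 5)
Your route is genuinely different from the paper's: the paper proves Lemma~\ref{korrektur} in two lines, by noting that nondegeneracy on $J$ in the sense used here implies nondegeneracy in the sense of \cite[Definition~3.1]{framed} (this is \cite[Proposition~3.3]{framed}) and then quoting \cite[Theorem~3.2]{framed}, from which the statement follows verbatim. You instead rebuild the machinery behind that citation: smooth dependence of $r_{A+\h A}(1)$ and $\Gamma_{\!A+\h A}$ on the $L^2$-coefficient, first-variation formulas, a six-dimensional $E\subset\t E$ chosen using density of $\t E$ and $L^2$-continuity of $D\Phi(0)$, weak-to-strong continuity of the flow under $A_n\weak A$, and a quantitative inverse function/degree argument for the perturbed maps $\Phi_n$ yielding $\h A_n\to 0$ in $E$. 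That skeleton is correct and is the natural self-contained proof; what it buys over the paper's approach is transparency, at the cost of redoing \cite{framed}. (Minor slip: your first variation of $\Gamma$ carries a spurious factor $r(t)^{-1}$; it should be $-\int_0^1\big(\int_0^t r^{-1}\h A\,r\,ds\big)\,r(t)^Te_1\,dt=-\int_0^1 r^{-1}\h A\,r\,\big(y(1)-y(s)\big)\,ds$, which changes nothing structurally.)

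The genuine gap is exactly the step you flag: surjectivity of $D\Phi(0)$ on perturbations supported in $J$, which is the entire mathematical content of the lemma, and there your sketch is not an argument — indeed its guiding heuristic is off, since the pointwise identity produced by an annihilating pair $(\Omega,v)$ contains no ``factor multiplying the $A_{13}$-direction'': the linearization at $\h A=0$ sees $A$ only through $r$ and $y$. Concretely (up to signs and a conjugation by $r(1)$), writing $\omega$ for the axial vector of $r(1)^T\Omega\,r(1)$, the annihilator condition reduces to requiring that $w(t):=2\omega-(y(1)-y(t))\times v$ be orthogonal to both $d_1(t)$ and $d_2(t)$ for a.e.\ $t\in J$. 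The hypothesis enters only after differentiating this relation on the measurable set $J$, using that the derivative of a $W^{1,1}$ function vanishes a.e.\ on any set where the function vanishes: since $(w\cdot d_1)'=A_{13}\,w\cdot d_3$, one gets $w=0$ a.e.\ on $J\cap\{A_{13}\neq0\}$; then $w'=d_1\times v=0$ there, and a second application of the same level-set lemma (to $d_1$ if $v\neq0$, to the constant $w\equiv 2\omega$ if $v=0$) contradicts $d_1'=A_{13}d_3\neq0$ on that set unless $(\Omega,v)=(0,0)$. None of this is in your proposal, and it is precisely the content of \cite[Proposition~3.3]{framed} together with \cite[Theorem~3.2]{framed}. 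Since you explicitly reserve the option of quoting \cite{framed} at that point — which is all the paper does, for the whole lemma — your plan is admissible as written; but as a self-contained proof it is incomplete exactly at its heart, while the remaining steps are, as you say, routine.
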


\begin{proof}
By \cite[Proposition~3.3]{framed}, if $A$ is nondegenerate
on $J$, then it is also nondegenerate in the
sense of \cite[Definition~3.1]{framed}. Hence, the lemma follows directly from 
\cite[Theorem~3.2]{framed}.
\end{proof}

\begin{remark}
If $\t E\subset L^\infty(I; \A)$, the sequence $\h A_n$ provided by the lemma converges to zero uniformly,
since $E$ is finite dimensional and all norms are topologically equivalent in finite dimension.
We shall use this remark several times in the following.
\end{remark}

We now prove several approximation results for nondegenerate and admissible functions
in $L^2(I;\A)$, that preserve admissibility.

\begin{lemma}\label{nochmal0}
Let $A\in L^2(I;\A)$ be nondegenerate and admissible. Then there exist
admissible $A^{(n)}\in L^2(I;\A)$ such that
$A^{(n)}\to A$ strongly in $L^2(I;\A)$ and
$A^{(n)}_{13}A^{(n)}_{23}\neq 0$ on a set of positive measure, independent of $n$.
\end{lemma}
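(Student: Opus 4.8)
The plan is to perturb $A$ on a small subinterval where $A_{13}$ is genuinely nonzero, switching on a nonzero $A_{23}$ component there, and then to restore admissibility by invoking the correction Lemma~\ref{korrektur}. First I would use nondegeneracy of $A$: the set $\{A_{13}\neq 0\}$ has positive measure, so by a Lebesgue density argument I can pick a subinterval $J_0\subset I$, of small length and with $J_0\cap\{A_{13}\neq 0\}$ of positive measure, whose complement $I\setminus J_0$ still meets $\{A_{13}\neq 0\}$ in a set of positive measure (so $A$ remains nondegenerate on $I\setminus J_0$). Fix a bump function $\psi\in C^\infty_c(J_0)$ that is not identically zero, and for $n$ large consider the candidate $B^{(n)}:= A + \tfrac1n\,\psi\,(e_2\otimes e_3 - e_3\otimes e_2)$. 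Then $B^{(n)}\to A$ strongly in $L^2(I;\A)$, $B^{(n)}_{13}=A_{13}$ is unchanged, and on $J_0\cap\{A_{13}\neq 0\}$ we have $B^{(n)}_{13}B^{(n)}_{23}=\tfrac1n\psi A_{13}\neq 0$ on a set of positive measure; moreover $B^{(n)}$ is still nondegenerate (indeed nondegenerate on $I\setminus J_0$). The only defect of $B^{(n)}$ is that it need not be admissible, i.e.\ $r_{B^{(n)}}(1)$ and $\Gamma_{B^{(n)}}$ have drifted away from $\o r$ and $\o\Gamma$.

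To repair this, I would apply Lemma~\ref{korrektur} with the nondegeneracy set taken to be $J:=I\setminus J_0$, on which $A$ is nondegenerate. Choose, as the dense subspace $\t E$, the smooth maps in $\{\h A\in L^2(I;\A): \h A = 0 \text{ a.e.\ in } J_0\}$; the lemma then furnishes a finite dimensional $E$ with the stated correction property. Since $B^{(n)}\to A$ weakly in $L^2(I;\A)$, Lemma~\ref{korrektur} yields $\h A_n\in E$ with $\h A_n\to 0$ and $r_{B^{(n)}+\h A_n}(1)=r_A(1)=\o r$, $\Gamma_{B^{(n)}+\h A_n}=\Gamma_A=\o\Gamma$ for all $n$ large. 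Setting $A^{(n)}:= B^{(n)}+\h A_n$, these $A^{(n)}$ are admissible, and $A^{(n)}\to A$ strongly in $L^2(I;\A)$ because both $B^{(n)}\to A$ (strongly) and $\h A_n\to 0$ (strongly, $E$ being finite dimensional). Crucially $\h A_n$ vanishes on $J_0$, so $A^{(n)}=B^{(n)}$ there, and hence $A^{(n)}_{13}A^{(n)}_{23}=B^{(n)}_{13}B^{(n)}_{23}\neq 0$ on the fixed positive-measure set $J_0\cap\{A_{13}\neq 0\}$, independent of $n$.

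The main obstacle — and the reason the argument has to be organized this way — is the interplay between the two requirements: the correction step of Lemma~\ref{korrektur} could, a priori, destroy the newly created nonvanishing of $A_{13}A_{23}$, while the perturbation step could destroy nondegeneracy (which is needed to even invoke the correction). The resolution is the disjointness of supports: I carve out $J_0$ for the $A_{23}$-perturbation and keep the correction supported in $I\setminus J_0$, so the correction is ``blind'' to what happens on $J_0$; and I must verify at the outset that $J_0$ can be chosen small enough that $A$ stays nondegenerate on $I\setminus J_0$, which is exactly where the positive-measure (rather than merely nonempty) formulation of nondegeneracy is used. A minor point to check is that $A^{(n)}$ lies in $L^2(I;\A)$ and not merely $L^2(I;\R^{3\times 3})$, but this is immediate since $e_2\otimes e_3 - e_3\otimes e_2\in\A$ and $E\subset L^2(I;\A)$ by construction.
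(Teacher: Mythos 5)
Your overall strategy is exactly the paper's: switch on a small $A_{23}$-component on a set where $A_{13}\neq 0$, and restore admissibility via Lemma~\ref{korrektur} with the correction supported in a disjoint region where $A$ is still nondegenerate, so that the correction cannot destroy the newly created nonvanishing. However, the verification of the key property contains a genuine flaw: with $B^{(n)}:=A+\tfrac1n\psi\,(e_2\otimes e_3-e_3\otimes e_2)$ you have $B^{(n)}_{23}=A_{23}+\tfrac1n\psi$, not $\tfrac1n\psi$, so the identity $B^{(n)}_{13}B^{(n)}_{23}=\tfrac1n\psi A_{13}$ is false unless $A_{23}=0$ a.e.\ on the perturbation set. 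Nothing in your setup guarantees this, and without it the set $\{A^{(n)}_{13}A^{(n)}_{23}\neq 0\}\cap J_0$ is $\{A_{13}\neq0\}\cap\{A_{23}\neq-\tfrac1n\psi\}\cap J_0$, whose exceptional part varies with $n$; producing a single positive-measure set independent of $n$ then requires an extra argument (e.g.\ noting that the sets $\{A_{23}/\psi=-1/n\}$ are pairwise disjoint, so their tail union has small measure) which you do not give. The paper disposes of this issue at the outset with the reduction ``we may assume $A_{13}A_{23}=0$ a.e., since otherwise there is nothing to prove'' (one may take $A^{(n)}=A$): this forces $A_{23}=0$ a.e.\ on $\{A_{13}\neq0\}$, after which the product computation is correct. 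Your proof needs either this reduction or the measure-theoretic selection argument; as written, the central claim is unjustified.

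Two smaller points. First, choosing $\psi\in C^\infty_c(J_0)$ merely ``not identically zero'' does not guarantee that $\{\psi\neq0\}\cap\{A_{13}\neq0\}$ has positive measure; you should take, say, $\psi>0$ on the interior of $J_0$ (or center the bump at the density point used to select $J_0$). Second, your application of Lemma~\ref{korrektur} is fine: with $J=I\setminus J_0$ nondegenerate by your choice of $J_0$, the space $\t E$ of smooth maps vanishing on $J_0$ is $L^2$-dense in $\{\h A\in L^2(I;\A):\h A=0 \text{ a.e.\ in } J_0\}$, $B^{(n)}\to A$ strongly (hence weakly), and the corrected $A^{(n)}=B^{(n)}+\h A_n$ converges strongly and agrees with $B^{(n)}$ on $J_0$ -- this part matches the paper, which uses $L^\infty$ maps supported on a second positive-measure subset $J_1$ of $\{A_{13}\neq0\}$ instead of your interval complement.
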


\begin{proof}
We may assume that $A_{13}A_{23} = 0$ almost everywhere,
since otherwise there is nothing to prove.
Since $A$ is nondegenerate, there exist two disjoint sets
$J_1$, $J_2$ of positive measure on which $A_{13}$ does not vanish.
Let $\t A^{(n)} : I\to\A$ be equal to $A$ except  on $J_2$ where we set $\t A_{23}^{(n)} = \frac{1}{n}$.
Since $A_{23} = 0$ on $J_2$, we see that $\t A^{(n)}$
converges to $A$ strongly in $L^2(I;\A)$.

Let $\t E$ be the set of maps in $L^{\infty}(I; \A)$
that vanish a.e.\ on $I\setminus J_1$. By Lemma~\ref{korrektur}
there exist $\h A_n\in\t {E}$ converging to zero uniformly
and such that $A^{(n)} := \t A^{(n)} + \h A^{(n)}$ is admissible.
By construction we have $A^{(n)}_{13}A^{(n)}_{23}=\frac{1}{n}A_{13}\neq 0$
on $J_2$, and $A^{(n)}\to A$ strongly in $L^2(I;\A)$.
\end{proof}

For $c > 0$ we set 
$$
\A_c := \{A\in\A :\ |A_{13}|\geq c\mbox{ and }|A_{23}|\geq c\}.
$$

\begin{lemma}\label{nochmal1}
Let $A\in L^2(I; \A)$ be nondegenerate and admissible. Then there exist
admissible $A^{(n)}\in L^2(I;\A_{1/n})$ such that
$A^{(n)}\to A$ strongly in $L^2(I;\A)$.
\end{lemma}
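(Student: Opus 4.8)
The plan is to bootstrap from Lemma~\ref{nochmal0}, which already gives us admissible approximants $\t A^{(n)}$ converging strongly to $A$ in $L^2$ and satisfying $\t A^{(n)}_{13}\t A^{(n)}_{23}\neq 0$ on a fixed set $J_0$ of positive measure, independent of $n$. The remaining issue is that $\t A^{(n)}$ need not be uniformly bounded away from $0$ in either entry on all of $I$; we only control the product on $J_0$. First I would fix the index and, for a given admissible $A'$ with $A'_{13}A'_{23}\neq 0$ on $J_0$, construct $A^{(m)}\to A'$ strongly with $A^{(m)}\in\A_{1/m}$; then a diagonal argument combined with Lemma~\ref{nochmal0} finishes the proof.

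To build these $A^{(m)}$ from $A'$: on the set where $|A'_{13}|<1/m$ replace $A'_{13}$ by $(1/m)\,\sgn(A'_{13})$ (taking the value $1/m$ where $A'_{13}=0$), and similarly clip $A'_{23}$ from below by $1/m$ in absolute value. Call the result $\hat A^{(m)}$. Since $A'_{13},A'_{23}\in L^2$, the set where the clipping is active has measure tending to $0$ as $m\to\infty$, and on that set the modification changes each entry by at most $|A'_{ij}|+1/m$, so $\hat A^{(m)}\to A'$ strongly in $L^2(I;\A)$ by dominated convergence. By construction $\hat A^{(m)}\in\A_{1/m}$ for $m$ large (we may need to shrink $1/m$ slightly, but renaming the sequence this is harmless). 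However, the clipping destroys admissibility: $r_{\hat A^{(m)}}(1)$ and $\Gamma_{\hat A^{(m)}}$ have drifted from $\o r,\o\Gamma$. Here is where Lemma~\ref{korrektur} enters. Note $\hat A^{(m)}\to A'$ strongly, hence weakly, in $L^2$, and $A'$ is nondegenerate on $J_0$ (since $A'_{13}\neq 0$ there); moreover on $J_0$ we may, if needed, also shrink $J_0$ to a subset where $|A'_{13}|\geq c$ for some fixed $c>0$, so that $A'$ is nondegenerate on that subset. Apply Lemma~\ref{korrektur} with $J=J_0$ and with $\t E$ the subspace of $L^\infty(I;\A)$ vanishing a.e.\ outside $J_0$: it yields a finite-dimensional $E\subset\t E$ and corrections $\h A^{(m)}\in E$ converging to zero (uniformly, by the Remark following Lemma~\ref{korrektur}, hence in $L^2$) such that $A^{(m)}:=\hat A^{(m)}+\h A^{(m)}$ is admissible for $m$ large.

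Two small points need care. The correction $\h A^{(m)}$ is supported in $J_0$, where $\hat A^{(m)}=\t A^{(m)}_{\text{clipped}}$ already has $|A^{(m)}_{13}|,|A^{(m)}_{23}|$ bounded below by a positive constant independent of $m$ (after the final clipping step with threshold $1/m$, and since on $J_0$ the unperturbed entries are bounded away from $0$); since $\h A^{(m)}\to 0$ uniformly, for $m$ large the perturbed map still satisfies $|A^{(m)}_{13}|\geq 1/m$ and $|A^{(m)}_{23}|\geq 1/m$ on $J_0$ as well, so $A^{(m)}\in\A_{1/m}$ on all of $I$. And $A^{(m)}=\hat A^{(m)}+\h A^{(m)}\to A'$ strongly in $L^2$ since both summands converge. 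Finally, combining this with Lemma~\ref{nochmal0}: the latter gives admissible $\t A^{(n)}\to A$ with product nonzero on a fixed $J_0$; applying the construction above to each $\t A^{(n)}$ gives admissible $A^{(n,m)}\in\A_{1/m}$ with $A^{(n,m)}\to\t A^{(n)}$ as $m\to\infty$; a diagonal choice $m=m(n)\to\infty$ slowly enough yields admissible $A^{(n)}\in\A_{1/m(n)}\subset\A_{1/n}$ (relabel) with $A^{(n)}\to A$ strongly in $L^2(I;\A)$, as desired.

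The main obstacle is not the clipping estimate, which is routine, but rather ensuring that all three requirements survive simultaneously: strong $L^2$ convergence, the uniform lower bound $A^{(n)}\in\A_{1/n}$ on the \emph{whole} interval, and admissibility. The tension is that restoring admissibility via Lemma~\ref{korrektur} perturbs the map, potentially violating the lower bound just established; this is resolved precisely because the correction is supported on a set $J_0$ where a \emph{fixed} positive lower bound (independent of $n$) is available, so a vanishingly small $L^\infty$ perturbation cannot cross the threshold $1/n$ for $n$ large. Keeping $J_0$ fixed throughout — which is exactly what Lemma~\ref{nochmal0} provides — is what makes the diagonal argument go through.
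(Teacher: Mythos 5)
Your proposal is correct and essentially reproduces the paper's argument: clip both entries at threshold $1/n$ to get strong convergence together with membership in $\A_{1/n}$, restore admissibility via Lemma~\ref{korrektur} with corrections supported on a fixed positive-measure set where the unclipped entries are bounded below by a constant (so the uniformly vanishing corrections cannot push the entries below the threshold, while off that set the correction vanishes and the clipped bound survives exactly), and reduce the general case to this one through Lemma~\ref{nochmal0} plus a diagonal argument, which the paper phrases as a ``without loss of generality''. One small fix: when you shrink $J_0$ you should pass to a subset on which \emph{both} $|A'_{13}|\ge c$ and $|A'_{23}|\ge c$ (possible since both entries are nonzero a.e.\ on $J_0$), because your final step uses that both unperturbed entries are bounded away from zero on the support of the correction --- this is precisely the paper's choice of the set $J_K$.
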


\begin{proof}
In view of Lemma~\ref{nochmal0}
we may assume, without loss of generality,
that $A_{13}A_{23}$ differs from zero on a set $J_{\infty}$ of positive measure.
For $k\in\NN$ define
$$
J_k := \left\{
t\in I : \ |A_{13}(t)|\geq\frac{1}{k}\mbox{ and }|A_{23}(t)|\geq\frac{1}{k}
\right\}.
$$
Clearly
$J_k\uparrow J_{\infty}$,
as $k\to +\infty$. 
Since $J_{\infty}$ has positive measure,
there exists $K\in\NN$ such that $J_K$ has positive measure.
By definition, $A$ is nondegenerate on $J_K$.
Denote by $\t E$ the set of
maps in $L^{\infty}(I;\A)$ that vanish a.e.\ in $I\setminus J_K$.
Now define
$\t A^{(n)} : I\to\A$
as follows: for $(i, j) = (2,3)$ and $(i, j) = (1,3)$ 
set
$$
\t A^{(n)}_{ij}(t) = 
\begin{cases}
A_{ij}(t) & \mbox{ if }|A_{ij}(t)| > \frac{1}{n},
\\
\frac{1}{n} &\mbox{ if }|A_{ij}(t)|\leq\frac{1}{n}.
\end{cases}
$$
Since $|\t A^{(n)} - A|\leq\frac{2}{n}$, we have that
$\t A^{(n)}\to A$ strongly in $L^2(I;\A)$. Hence,
by Lemma~\ref{korrektur} there exist a finite dimensional
subspace $E$ of $\t E$ and some $\h A^{(n)}\in E$ such that
$\h A^{(n)}\to 0$ uniformly and $A^{(n)}: = \t A^{(n)} + \h A^{(n)}$
is admissible.

Almost everywhere on $I\setminus J_K$ we have
$$
A^{(n)} = \t A^{(n)}\in\A_{1/n},
$$
whereas on $J_K$, for large $n$, we have $\t A^{(n)} = A$ and thus,
$$
|A^{(n)}_{13}| \geq |A_{13}| - \|\h A^{(n)}\|_{L^{\infty}} \geq\frac{1}{2K}.
$$
The same holds for $A^{(n)}_{23}$.  Hence $A^{(n)}\in L^2(I; \A_{1/n})$ for any $n$ large enough. 
Moreover,  $A^{(n)} = \t A^{(n)} + \h A^{(n)}\to A$ strongly in $L^{\infty}(I;\A)$
and the proof is concluded.
\end{proof}

We denote by $\P(I; X)$ the piecewise constant functions
from $I$ into the set $X$, i.e., $f\in\P(I; X)$ if there
exists a finite covering of $I$ by disjoint nondegenerate intervals on each of
which $f$ is constant. Moreover, we set $\P(I):=\P(I; \R)$.

\begin{lemma}\label{nochmal2}
Let $A\in L^2(I; \A)$ be nondegenerate and admissible. Then there exist
$c_n > 0$ and admissible $A^{(n)}\in\P(I; \A_{c_n})$ such that
$A^{(n)}\to A$ strongly in $L^2(I; \A)$.
\end{lemma}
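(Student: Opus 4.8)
\textbf{Proof plan for Lemma~\ref{nochmal2}.}
The goal is to approximate an admissible, nondegenerate $A\in L^2(I;\A)$ by admissible \emph{piecewise constant} maps taking values in some $\A_{c_n}$. The natural strategy is a two-stage approximation followed by a correction, exactly in the spirit of Lemmas~\ref{nochmal0} and~\ref{nochmal1}. First I would invoke Lemma~\ref{nochmal1} to reduce to the case $A\in L^2(I;\A_{1/m})$ for fixed large $m$: by a diagonal argument it suffices to approximate each such $A$ (with the required piecewise constant structure and preserved admissibility) and then let $m\to\infty$. So from now on assume $A\in L^2(I;\A_{c})$ for some fixed $c=1/m>0$, i.e.\ $|A_{13}|\geq c$ and $|A_{23}|\geq c$ a.e.\ in $I$.

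Next I would approximate $A$ in $L^2(I;\A)$ by piecewise constant maps $\t A^{(n)}\in\P(I;\A)$ with $\t A^{(n)}\to A$ strongly in $L^2(I;\A)$; this is the standard density of step functions in $L^2$. The point is that one can do this keeping the two-sided bound: by truncating the values appropriately (project each constant value back into $\A_{c/2}$, say, which changes each entry by at most the error and can be arranged to cost only an extra $o(1)$ in $L^2$) one obtains $\t A^{(n)}\in\P(I;\A_{c/2})$ with $\t A^{(n)}\to A$ strongly in $L^2$, hence also strongly in $L^2(I;\A)$. Now $\t A^{(n)}$ need not be admissible, so I would correct it using Lemma~\ref{korrektur}: since $A$ is nondegenerate on $J:=I$, fix a nondegenerate subinterval $J_0\subset I$ on which $|A_{13}|\geq c$ (any subinterval works since the bound holds a.e.), and take $\t E$ to be the piecewise constant maps in $L^\infty(I;\A)$ that vanish a.e.\ outside $J_0$ — this is an $L^2$-dense subspace of $\{\h A\in L^2(I;\A):\h A=0 \text{ a.e.\ in }I\setminus J_0\}$, so Lemma~\ref{korrektur} supplies a finite dimensional $E\subset\t E$ and corrections $\h A^{(n)}\in E$ with $\h A^{(n)}\to 0$ uniformly and $A^{(n)}:=\t A^{(n)}+\h A^{(n)}$ admissible for $n$ large. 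Since $E$ consists of piecewise constant maps (finitely many of them spanning $E$), $A^{(n)}\in\P(I;\A)$; refining the partition of $\t A^{(n)}$ to also resolve the breakpoints of the basis of $E$ makes this rigorous. For $n$ large the uniform smallness $\|\h A^{(n)}\|_{L^\infty}\leq c/4$ gives $|A^{(n)}_{13}|\geq c/2-c/4=c/4$ off $J_0$, and on $J_0$ one still has $|A^{(n)}_{13}|\geq c-c/4>c/4$; the same for the $23$-entry. Hence $A^{(n)}\in\P(I;\A_{c/4})$, so we may take $c_n:=c/4$ (a fixed constant, which is even stronger than asserted). Finally $A^{(n)}=\t A^{(n)}+\h A^{(n)}\to A$ strongly in $L^2(I;\A)$, completing the proof.

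\textbf{Main obstacle.} The one delicate point is keeping the two-sided lower bound $|A_{ij}|\geq c_n$ while simultaneously (a) passing to piecewise constant maps and (b) correcting to restore admissibility. The correction from Lemma~\ref{korrektur} is only controlled in the $L^\infty$ norm with size $o(1)$, which is exactly why reducing first to the case $A\in\A_c$ with a \emph{fixed} margin $c>0$ (via Lemma~\ref{nochmal1}) is essential: it provides the buffer against which the $o(1)$ correction can be absorbed. Without that reduction, a correction of uniform size $1/n$ could destroy a bound of the same order $1/n$. Beyond this, the only bookkeeping issue is ensuring the corrected map remains genuinely piecewise constant, which is handled by taking a common refinement of the partitions of $\t A^{(n)}$ and of a fixed basis of the finite dimensional space $E$.
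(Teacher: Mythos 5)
Your proposal is correct and follows essentially the same route as the paper: reduce via Lemma~\ref{nochmal1} to the case of values in $\A_c$ with a fixed margin, approximate by piecewise constant maps preserving that margin, and restore admissibility with a piecewise constant correction from Lemma~\ref{korrektur}, absorbing its uniformly small size into the margin. The only differences are cosmetic (the paper builds the piecewise constant approximants via positive/negative parts and simple functions rather than by truncation/projection, and applies Lemma~\ref{korrektur} with $\t E=\P(I;\A)$ on all of $I$ rather than on a subinterval), so no further comment is needed.
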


\begin{proof}
In view of Lemma~\ref{nochmal1} we may assume that 
$A$ take values in $\A_{2\e}$ for some $\e > 0$.
Hence  there exist
$\t A^{(n)}\in\P(I; \A_{\e})$ which converge to $A$ strongly in $L^2(I; \A)$.
Indeed, for $i=1,2$, we can write $A_{i3}$ as difference of the positive and negative parts, $A_{i3}^+$ and $A_{i3}^-$, which are not both smaller than $\e$. It is well known that there exist increasing sequences of simple measurable functions $S_{i3}^{\pm(n)}$ such that $\e\le S_{i3}^{\pm(n)}\le A_{i3}^{\pm}$ and
$S_{i3}^{\pm(n)}\to A_{i3}^\pm$ a.e.\ in $I$. Since the sets where the simple functions $S_{i3}^{\pm(n)}$ are constant are measurable, they can be approximated in measure from inside 
by finite unions of intervals. Hence, the same result holds for $S_{i3}^{\pm(n)}\in\P(I; [\e,+\infty))$.  By Lebesgue's theorem the sequences $S_{i3}^{(n)}:=S_{i3}^{+(n)}-S_{i3}^{-(n)}$ strongly converge in $L^2(I; \A)$ to $A_{i3}$.
Applying Lemma~\ref{korrektur} with
$\t E = \P(I; \A)$, we find $\h A^{(n)}\in\P(I; \A)$ converging to
zero uniformly and such that $A^{(n)} := \t A^{(n)} + \h A^{(n)}$
is admissible for $n$ large enough. Clearly,
$A^{(n)}\in\P(I; \A_{\e/2})$ for $n$ large.
\end{proof}

For $A\in\A$ we define
$$
\gamma_A := \frac{A_{23}^2}{A_{13}},
$$
with $\gamma_A := +\infty$ if $A_{13} = 0$ and $A_{23}\neq 0$, whereas
$\gamma_A := 0$ if $A_{13} = A_{23} = 0$.

\begin{lemma}\label{schrittz}
For all nondegenerate and admissible $A\in L^2(I;\A)$
and all $\gamma\in L^2(I)$ there exist
$\la_n\in\P(I)$ and $p_n\in\P(I; \S^1)$ with $\la_n\ne0$, 
$p_n\cdot \u e_1 > 0$ on $I$ and $p_n = \u e_1$ near $\d I$,
such that 
$$
A^{(n)} = A_{\la_n p_n\otimes p_n}
$$
is admissible, $(A^{(n)}, \gamma_{A^{(n)}})\weak (A, \gamma)$ weakly in $L^2(I;\A\times\R)$,
and 
\begin{equation}\label{limF}
\FF
\begin{pmatrix}
A^{(n)}_{13} & A^{(n)}_{23}
\\
A^{(n)}_{23} & \gamma_{A^{(n)}}
\end{pmatrix} \,
\to \, \FF
\begin{pmatrix}
A_{13} & A_{23}
\\
A_{23} & \gamma
\end{pmatrix}.
\end{equation}
\end{lemma}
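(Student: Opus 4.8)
The plan is to reduce to piecewise constant data using the lemmas of this section, then perform an explicit rank-one lamination on each constant piece while keeping the lamination directions strictly inside the open half-circle $\{p\cdot\u e_1>0\}$, and finally restore admissibility by Lemma~\ref{korrektur} and diagonalize.

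\medskip
\noindent\textbf{Step 1: reduction.} By Lemma~\ref{nochmal2} I would choose admissible $A^{(k)}\in\P(I;\A_{c_k})$ with $A^{(k)}\to A$ strongly in $L^2(I;\A)$, and independently $\gamma^{(k)}\in\P(I)$ with $\gamma^{(k)}\to\gamma$ in $L^2(I)$, refining partitions so that $A^{(k)}$ and $\gamma^{(k)}$ are constant on a common partition of $I$. Let $N^{(k)}$ be the symmetric $2\times2$ matrix field with entries $N^{(k)}_{11}=A^{(k)}_{13}$, $N^{(k)}_{12}=A^{(k)}_{23}$, $N^{(k)}_{22}=\gamma^{(k)}$, and let $N$ be built analogously from $A,\gamma$ (this is the matrix of \eqref{limF}). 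Then $N^{(k)}\to N$ in $L^2(I;\R^{2\times2}_{\sym})$, hence $\FF(N^{(k)})\to\FF(N)$, since $\FF$ is $L^2$-continuous (the only nontrivial point is that $M\mapsto\int_0^1|\det M|$ is continuous, because $(f,g)\mapsto fg$ maps $L^2\times L^2$ continuously into $L^1$). The essential gain is that $|N^{(k)}_{12}|=|A^{(k)}_{23}|\ge c_k>0$, so \emph{$N^{(k)}$ is nowhere diagonal}. It then suffices, for each fixed $k$, to construct a sequence $B^{(k)}_n=A_{\la^{(k)}_np^{(k)}_n\otimes p^{(k)}_n}$ with $\la^{(k)}_n\in\P(I)\setminus\{0\}$, $p^{(k)}_n\in\P(I;\S^1)$, $p^{(k)}_n\cdot\u e_1>0$ and $p^{(k)}_n=\u e_1$ near $\d I$, admissible for $n$ large, with $B^{(k)}_n\weak A^{(k)}$ in $L^2(I;\A)$, $\gamma_{B^{(k)}_n}\weak\gamma^{(k)}$ in $L^2(I)$, $\FF(\la^{(k)}_np^{(k)}_n\otimes p^{(k)}_n)\to\FF(N^{(k)})$, and uniform (in $n$) bounds $\|B^{(k)}_n\|_{L^\infty}\le C_k$ and $|(B^{(k)}_n)_{13}|\ge c_k'>0$ on $I$; the statement will then follow from one diagonal argument over $k$.

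\medskip
\noindent\textbf{Step 2: lamination on a constant piece.} Fix $k$ and a piece on which $N^{(k)}\equiv\bar N$ is constant, with eigenvalues $\mu_1,\mu_2$ and orthonormal eigenvectors $q_1,q_2$. Since $\bar N_{12}\ne0$, neither $\u e_1$ nor $\u e_2$ is an eigenvector of $\bar N$, so after replacing $q_i$ by $-q_i$ if needed we may assume $q_1\cdot\u e_1>0$ and $q_2\cdot\u e_1>0$; this is where Step~1 is used. If $\bar N$ has rank one I take $B^{(k)}_n\equiv A_{\bar N}$; otherwise, with $\sigma:=|\mu_1|+|\mu_2|>0$, I oscillate between the two rank-one matrices $a_i:=\operatorname{sgn}(\mu_i)\,\sigma\,q_i\otimes q_i$ with weights $\theta_i:=|\mu_i|/\sigma$. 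A one-line computation gives $\theta_1a_1+\theta_2a_2=\mu_1q_1\otimes q_1+\mu_2q_2\otimes q_2=\bar N$ and $\theta_1|a_1|^2+\theta_2|a_2|^2=\sigma^2=|\bar N|^2+2|\det\bar N|$, which is exactly the value recorded by $\FF$ on this piece. Each $a_i$ has the form $\la_ip_i\otimes p_i$ with $p_i=q_i$ (so $p_i\cdot\u e_1>0$), with $|\la_i|=\sigma\ge|\bar N_{11}|=|A^{(k)}_{13}|\ge c_k$, and with $|(a_i)_{11}|=\sigma(q_i\cdot\u e_1)^2>0$; over the finitely many pieces these lower bounds combine into a single $c_k'>0$.

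\medskip
\noindent\textbf{Step 3: assembly and correction of the boundary conditions.} I would subdivide each piece into $n$ equal subintervals, letting $(\la^{(k)}_n,p^{(k)}_n)$ take the two laminate values in proportions close to $\theta_1,\theta_2$, and on two shrinking intervals $(0,\delta_n)$, $(1-\delta_n,1)$ with $\delta_n\to0$ I would override with $p^{(k)}_n=\u e_1$ and $\la^{(k)}_n$ equal to the (nonzero) value of $A^{(k)}_{13}$ at the adjacent endpoint. For $B^{(k)}_n:=A_{\la^{(k)}_np^{(k)}_n\otimes p^{(k)}_n}\in\P(I;\A)$ the associated $2\times2$ matrix is the rank-one $\la^{(k)}_np^{(k)}_n\otimes p^{(k)}_n$, whose entries oscillate among the laminate values, so by weak $L^2$-convergence of oscillating sequences $B^{(k)}_n\weak A^{(k)}$ in $L^2(I;\A)$, $\gamma_{B^{(k)}_n}\weak\gamma^{(k)}$ in $L^2(I)$, and $\FF(\la^{(k)}_np^{(k)}_n\otimes p^{(k)}_n)=\int_0^1(\la^{(k)}_n)^2\,dt\to\FF(N^{(k)})$ (the two shrinking intervals contributing negligibly). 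To recover admissibility I fix a positive-measure set $J\subset(\eta,1-\eta)$ for some $\eta>0$; since $A^{(k)}$ is $\A_{c_k}$-valued it is nondegenerate on $J$, so Lemma~\ref{korrektur} applied to $B^{(k)}_n\weak A^{(k)}$ with $\t E=\P(I;\A)$ vanishing off $J$ gives $\h A^{(k)}_n\to0$ uniformly, supported in $J$, such that $\h B^{(k)}_n:=B^{(k)}_n+\h A^{(k)}_n$ is admissible for $n$ large. For $n$ large $\|\h A^{(k)}_n\|_{L^\infty}<c_k'/2$, hence $|(\h B^{(k)}_n)_{13}|\ge c_k'/2$ on all of $I$; and since any $\t A\in\A$ with $\t A_{13}\ne0$ equals $A_M$ for the rank-one $M$ with $M_{11}=\t A_{13}$, $M_{12}=\t A_{23}$, $M_{22}=\t A_{23}^2/\t A_{13}$, i.e.\ $M=\la p\otimes p$ with $\la=M_{11}+M_{22}\ne0$ and $p$ the unit multiple of $(\t A_{13},\t A_{23})$ chosen with $p\cdot\u e_1>0$, the corrected $\h B^{(k)}_n$ is again of the required form $A_{\h\la^{(k)}_n\h p^{(k)}_n\otimes\h p^{(k)}_n}$ with $\h\la^{(k)}_n\in\P(I)\setminus\{0\}$, $\h p^{(k)}_n\in\P(I;\S^1)$, $\h p^{(k)}_n\cdot\u e_1>0$, and $\h p^{(k)}_n=\u e_1$ near $\d I$ (where $\h A^{(k)}_n=0$). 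The weak limits and the $\FF$-convergence persist, as the correction perturbs all quantities by amounts going to $0$ uniformly — for $\gamma_{\h B^{(k)}_n}$ one uses the uniform lower bound on $(\h B^{(k)}_n)_{13}$.

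\medskip
\noindent\textbf{Step 4: diagonalization and the main difficulty.} Since $|\h B^{(k)}_n|^2=2(\h\la^{(k)}_n)^2(\h p^{(k)}_n\cdot\u e_1)^2\le2(\h\la^{(k)}_n)^2$ and $\int_0^1(\h\la^{(k)}_n)^2\,dt=\FF(\h\la^{(k)}_n\h p^{(k)}_n\otimes\h p^{(k)}_n)\to\FF(N^{(k)})\to\FF(N)$, the $\h B^{(k)}_n$ eventually lie in a fixed ball of $L^2(I;\A)$, on which weak convergence is metrizable; combining $\h B^{(k)}_n\weak A^{(k)}$, $\gamma_{\h B^{(k)}_n}\weak\gamma^{(k)}$ and $\FF(\cdot)\to\FF(N^{(k)})$ (as $n\to\infty$) with $A^{(k)}\to A$, $\gamma^{(k)}\to\gamma$ and $\FF(N^{(k)})\to\FF(N)$ (as $k\to\infty$), a standard diagonal choice $n_k\to\infty$ yields, after relabelling, the desired $\la_n$ and $p_n$. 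I expect the genuinely delicate point to be the sign constraint $p_n\cdot\u e_1>0$: without it the statement is essentially \cite[Lemma~3.1]{FrHoMoPa}, but with it the natural optimal laminate of a target matrix that is diagonal with indefinite spectrum would be forced to use $p=\pm\u e_2\notin\{p\cdot\u e_1>0\}$. The way around this is exactly the reduction to $\A_c$-valued $A$ in Step~1, which makes $\bar N_{12}\ne0$ and hence the eigenvectors of $\bar N$ transversal to $\u e_2$; the secondary point requiring care is to propagate the uniform lower bound on $(B^{(k)}_n)_{13}$, so that the admissibility correction of Lemma~\ref{korrektur} does not throw the approximants out of the class $A_{\la p\otimes p}$.
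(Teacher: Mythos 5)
Your proposal is correct and follows essentially the same route as the paper's proof: reduction to piecewise constant data with values in $\A_c$ via Lemma~\ref{nochmal2} (and piecewise constant $\gamma$), spectral lamination in which both rank-one directions are chosen in the half-plane $\{p\cdot \u e_1>0\}$ (possible precisely because $A_{23}$ is bounded away from zero), correction of the boundary conditions via Lemma~\ref{korrektur} on an interior set, recovery of the form $A_{\la_n p_n\otimes p_n}$ from the uniform lower bound on the $13$-entry, and a final diagonal argument. The only small fix: in Step 3 choose $J$ to be an open interval (as in the paper) rather than an arbitrary set of positive measure, so that the piecewise constant fields supported in $J$ are indeed $L^2$-dense in the space required by Lemma~\ref{korrektur}.
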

\begin{proof}
We may assume without loss of generality that $\gamma\in\P(I)$.
In fact, if the lemma is true for all $\gamma\in\P(I)$, then,
by approximation and a standard diagonal procedure, it is true also for arbitrary $\gamma\in L^2(I)$.
For the same reason, in view of Lemma~\ref{nochmal2}
we may assume that $A\in\P(I;\A_c)$ for some $c > 0$.
Hence the map 
$$
M := \begin{pmatrix}
A_{13} & A_{23}
\\
A_{23} & \gamma
\end{pmatrix}
$$
is piecewise constant and never zero.
Since $M_{12}$ is bounded away from zero, the eigenvectors
of $M$ have components along $\u e_1$ and $\u e_2$ bounded away from zero.
We choose $a(x)\in\S^1$ among both eigenvectors and among both signs
to be an eigenvector
of $M(x)$ for which $a(x){\,\cdot\,} \u e_1$ is maximal; in particular, $a{\,\cdot\,} \u e_1\geq c$
for a positive constant~$c$.
Define
$$
\t a = 
\begin{cases}
a^{\perp} &\mbox{ if }a\cdot \u e_2 < 0,
\\
-a^{\perp} &\mbox{ if }a\cdot \u e_2 > 0,
\end{cases}
$$
where $a^{\perp} = (-a_2, a_1)$. 
Then $\t a$ is piecewise constant and $\t a\cdot \u e_1 > 0$.

Denote by $\la_1$ the eigenvalue corresponding to $a$, and by $\la_2$
the other one. Like $M$ itself, the function
$$
\La := |\la_1| + |\la_2|
$$
is piecewise constant and never zero.
Moreover, there 
exist piecewise constant functions $\theta : I\to [0, 1]$
and $\sigma^{(i)} : I\to\{-1, 1\}$ such that 
the following spectral decomposition holds:
$$
M = \Lambda\left(\sigma^{(1)}\theta a\otimes a + \sigma^{(2)} 
(1 - \theta) \t a\otimes \t a \right).
$$

Let $\chi_n : I\to \{0, 1\}$ be such that $\chi_n\weak\theta$
weakly$^*$ in $L^{\infty}(I)$ and define 
\begin{align*}
\sigma_n &:= \begin{cases}
\sigma^{(1)}\chi_n + \sigma^{(2)} (1-\chi_n)&\mbox{ on }(\frac{1}{n},1-\frac{1}{n}),
\\
1&\mbox{ elsewhere, }%(0,\frac{1}{n})\cup(1-\frac{1}{n},1),\\
\end{cases}
\end{align*}
and 
$$
\t p_n := 
\begin{cases}
\chi_n a + (1-\chi_n)\t a &\mbox{ on }(\frac{1}{n},1-\frac{1}{n}),
\\
e_1 &\mbox{ elsewhere. }
\end{cases}
$$
Finally, let 
$$
\t M_n := \sigma_n \La \t p_n\otimes \t p_n.
$$
Since we can write
$$
\t M_n=\begin{cases}
\Lambda\left(\chi_n\sigma^{(1)}a\otimes a + (1-\chi_n)\sigma^{(2)} 
 \t a\otimes \t a \right) &\mbox{ on }(\frac{1}{n},1-\frac{1}{n}),
\\
\Lambda \u e_1\otimes \u e_1  &\mbox{ elsewhere, }
\end{cases}
$$
we have that
 $\t M_n$ converges to $M$ weakly$^*$ in $L^{\infty}(I;\R^{2\times 2}_{\sym})$.
Moreover,
\begin{equation}\label{Fequal}
\int_0^1 |\t M_n|^2\, dt = \int_0^1\La^2\, dt = \int_0^1( |M|^2 + 2|\det M|)\, dt \qquad
\mbox{ for all }n.
\end{equation}

Let $\t A^{(n)} := A_{\t M_n}$. Since $A = A_M$, we have that $\t A^{(n)}$ converges to $A$ weakly$^*$ in $L^{\infty}(I;\A)$.
Moreover, $\gamma_{\t A^{(n)}}=(\t M_n)_{22}$ since $\det \t M_n=0$, hence $\gamma_{\t A^{(n)}}$  weakly$^*$ converges to $M_{22}=\gamma$ 
in $L^{\infty}(I)$. Condition \eqref{Fequal} rewrites as
\begin{equation}\label{FequalA}
\FF
\begin{pmatrix}
A^{(n)}_{13} & A^{(n)}_{23}
\\
A^{(n)}_{23} & \gamma_{A^{(n)}}
\end{pmatrix}
=
\FF
\begin{pmatrix}
A_{13} & A_{23}
\\
A_{23} & \gamma
\end{pmatrix} \qquad
\mbox{ for all }n.
\end{equation}

We now modify $\t A^{(n)}$ in such a way to make it admissible.
Let $J\subset(\frac{1}{4}, \frac{3}{4})$
be an open interval on which $M$ is constant.
Denote by $\t E$ the set of maps in $\P(I; \A)$ that vanish a.e.\ in $I\setminus J$.
Since $A$ is nondegenerate on $J$, by Lemma~\ref{korrektur}
there is a finite dimensional subspace $E\subset\t E$
and $\h A^{(n)}\in E$ converging to zero uniformly such that
$A^{(n)} := \t A^{(n)} + \h A^{(n)}$ is admissible.
Clearly, $A^{(n)}$ is piecewise constant and $A^{(n)}$ converges to $A$ weakly$^*$ in $L^{\infty}(I;\A)$. 

We claim that $\gamma_{A^{(n)}}$ converges to $\gamma$  weakly$^*$
in $L^{\infty}(I)$. It is enough to show that
\begin{equation}\label{gamma-u}
\gamma_{A^{(n)}} - \gamma_{\t A^{(n)}} \to 0
\quad \mbox{ uniformly on }I.
\end{equation}
We first note that
there is a constant $c>0$ such that for all $n$
\begin{equation}
\label{einsicht-1}
|\t A^{(n)}_{13}| = |(\t M_n)_{11}|
= |\sigma_n\La (\t p_n\cdot e_1)^2|\geq c
\quad\mbox{ on }I. % {\color{red}\mbox{ (in fact on $I$)}}.
\end{equation}
The same is true (with a smaller constant) for $A^{(n)}_{13}$ for large $n$, 
since $\h A^{(n)}\to 0$ uniformly. 
Therefore, using that $\t A^{(n)}$ and $A^{(n)}$ are uniformly bounded, we obtain
\begin{eqnarray*}
|\gamma_{A^{(n)}}-\gamma_{\t A^{(n)}}|&=&\left|\frac{(A_{23}^{(n)})^2}{A_{13}^{(n)}}-\frac{(\t A^{(n)}_{23})^2}{\t A^{(n)}_{13}}\right|=
\left|\frac{(A_{23}^{(n)})^2 \t A^{(n)}_{13} - (\t A^{(n)}_{23})^2 A_{13}^{(n)}}{A_{13}^{(n)}\t A^{(n)}_{13}}\right|\\
&\le&
\frac{1}{c^2}\Big(\big|\t A^{(n)}_{13} \big|\big|{(A_{23}^{(n)})^2- (\t A^{(n)}_{23})^2 \big|+\big|(\t A^{(n)}_{23})^2\big|\big|\t A^{(n)}_{13} - A_{13}^{(n)}}\big|\Big)\\
&\le&
C\big|{\h A_{23}^{(n)}}|+C|\h A_{13}^{(n)}|.
\end{eqnarray*}
which implies \eqref{gamma-u}.

Equation \eqref{limF} follows from \eqref{FequalA}, \eqref{gamma-u}, and the uniform convergence of $\h A^{(n)}$.

It remains to show that $A^{(n)}$ is of the form $A_{\la_n p_n\otimes p_n}$ for some $\la_n$ and $p_n$ satisfying the desired properties. 
Keeping in mind that $A^{(n)}_{13}\neq 0$ due to \eqref{einsicht-1},
we define 
\begin{equation}
\label{defpn}
p_n :=
\alpha_n
\begin{pmatrix}
A^{(n)}_{13}
\\
A^{(n)}_{23}
\end{pmatrix},
\end{equation}
where
$$
\alpha_n:=\frac{\sgn A^{(n)}_{13}}{\left(( A^{(n)}_{13})^2 + (A^{(n)}_{23})^2\right)^{1/2}},
$$
and $\la_n := A^{(n)}_{13} + \gamma_{A^{(n)}}$.
It is easy to check that $A^{(n)} = A_{\la_n p_n\otimes p_n}$,
and $\la_n$ and $p_n$ have all the stated properties. 
In particular, $p_n=\t p_n$ on $I\setminus J$, hence $p_n = \u e_1$ near $\d I$, and by a consequence of \eqref{einsicht-1}
$$
|\la_n|=\left|A^{(n)}_{13}+\frac{(A^{(n)}_{23})^2}{A^{(n)}_{13}}\right|=
\left|\frac{(A^{(n)}_{13})^2+(A^{(n)}_{23})^2}{A^{(n)}_{13}}\right|\ge|A^{(n)}_{13}|\ge c
$$
for a suitable constant $c>0$.
\end{proof}

We are now in a position to prove the main result of this section, namely Proposition~\ref{haupt}.

\begin{proof}[Proof of Proposition~\ref{haupt}] 
Let us assume
that $M = \la p\otimes p$,
where
$p : I\to\S^1$ and $\la: I\to\R\setminus\{0\}$ are piecewise constant, 
$p\cdot \u e_1 > 0$ on $I$ and $p = \u e_1$ near $\d I$.
In view of Lemma~\ref{schrittz}
this constitutes no loss of generality, provided we prove
strong rather than weak convergence in Proposition~\ref{haupt}\,--\,\eqref{haupt-ii}. 
Indeed, since $M_{11}\ne0$ on a set of positive measure, then $A_M$ is nondegenerate
and we can apply Lemma~\ref{schrittz} to $A=A_M$ and $\gamma=M_{22}$. Then there exists a sequence
$M^{(n)}=\lambda_np_n\otimes p_n$ with $0\ne\la_n\in\P(I)$ and $p_n\in\P(I;\S^1)$ as in Lemma~\ref{schrittz}. 
To each element of this sequence we can apply the version of Proposition~3.1
that we are going to prove obtaining a sequence $M^{(n)}_k:=\lambda^k_np_n^k\otimes p_n^k$ such that $M^{(n)}_k$ converges to $M^{(n)}$ strongly in $L^2(I;\R^{2\times 2}_{\sym})$ and 
$\F(M^{(n)}_k)\to\F(M^{(n)})$, as $k\to\infty$. The required sequence is then given by $M^{(n)}_{k_n}$, where 
$(k_n)$ is an increasing sequence such that $\|M^{(n)}_{k_n}-M^{(n)}\|_{L^2}+|\F(M^{(n)}_k)-\F(M^{(n)})|<\frac{1}{2^n}$ for every $n$.

Let us write $p(t)={\rm e}^{i\theta(t)}$ with argument $\theta\in\P(I)$. Since $p$ is a Lipschitz continuous function of $\theta$, we can mollify the argument to obtain smooth
$\t p_n : I\to\S^1$ converging to $p$ boundedly in measure, that is,  $\t p_n$ converges in measure and 
$\sup_n\|\t p_n\|_{L^\infty}<+\infty$. Note that for every $n$ large enough we have $\t p_n = \u e_1$ near $\d I$,
as well as
\begin{equation}\label{pnba0}
\t p_n\cdot \u e_1\geq c
\end{equation}
for some constant $c > 0$.
The latter follows from the same property for $p$, which is 
stable under mollification of the argument.
By mollifying  $\la$ we obtain smooth $\t\la_n$ converging to $\la$
boundedly in measure such that
\begin{equation}\label{lnba0}
|\t\la_n|\ge c
\end{equation} 
for some constant $c>0$ and for every $n$ large enough, since a similar inequality holds for $\lambda$.
Observe that both $\t p_n$ and $\t\la_n$ are well-defined
and smooth up to the boundary of~$I$.

Let us define 
$
\t M_n := \t\la_n \t p_n\otimes \t p_n.
$
Since $\t M_n\to M$ boundedly in measure, we have that
%and even locally uniformly
%on each interval where $M$ is constant.
$A_{\t M_n}\to A_M$ and $\gamma_{A_{\t M_n}}=({\t M_n})_{22}\to M_{22}=\gamma_{A_M}$ in the same sense.

We now modify $A_{\t M_n}$ in such a way to make it admissible.
Let $J\subset (1/4, 3/4)$ be a nondegenerate open interval
on which $\la$ and $p$ are constant.
Let $\t E = C_0^{\infty}(J; \A)$ be the space of smooth functions with compact support. Since $A_M$ is admissible and nondegenerate on $J$, by  Lemma~\ref{korrektur}
there exists a finite dimensional subspace $E\subset\t E$
and some $\h A^{(n)}\in E$ converging to zero uniformly
such that $A^{(n)} := \h A^{(n)} + A_{\t M_n}$ are admissible.
Clearly, $A^{(n)}$ is smooth up to the boundary and $A^{(n)}\to A_M$ boundedly in measure.
% and locally uniformly
%on each interval where $M$ is constant.

We claim that $\gamma_{A^{(n)}}\to\gamma_{A_M}$
boundedly in measure. 
First of all,  by  \eqref{pnba0} and \eqref{lnba0} there exists a constant $c>0$ such that  $|({\t M_n})_{11}|=|\t\la_n||\t p_n\cdot \u e_1|^2\ge c$. This implies that also
$A_{13}^{(n)}=({\t M_n})_{11}+\h A_{13}^{(n)}$ is bounded away from zero for $n$ large enough, because $\h A_{13}^{(n)}$ converges uniformly to $0$. Therefore, we can argue as in the proof of Lemma~\ref{schrittz}
and show that $\gamma_{A^{(n)}}-\gamma_{A_{\t M_n}}\to 0$ uniformly. Since $\gamma_{A_{\t M_n}}\to\gamma_{A_M}$ 
boundedly in measure, this proves the claim.

We now define $p_n$ as in \eqref{defpn} and $\la_n := A_{13}^{(n)} + \gamma_{A^{(n)}}$, so that $A^{(n)} = A_{\la_n p_n\otimes p_n}$.  
Since $A^{(n)}$ is smooth up to the boundary and $A_{13}^{(n)}$ is bounded away from zero for $n$ large,
the functions $p_n$ and $\la_n$ are smooth and $p_n{\,\cdot\,} \u e_1 > 0$ on $\o I$. By construction $p_n=\t p_n$ on $I\setminus J$, hence $p_n = \u e_1$ near $\d I$.

Finally, since $(\la_n p_n\otimes p_n)_{22}=\gamma_{A^{(n)}}$, we have that
$\la_n p_n\otimes p_n\to M$ boundedly in measure, hence
strongly in $L^2(I;\R^{2\times 2}_{\sym})$. This implies condition~(iii).
\end{proof}

\section{The recovery sequence}\label{RS}

In this section we prove part (ii) of Theorem~\ref{Gamma}, namely the existence of a recovery sequence.

\begin{proof}[Proof of Theorem~\ref{Gamma}\,--\,(ii)]
Let $(y,r)\in \AA_0$.
We set %$R:=(y'\, |\, d_2\, |\, d_3)\in SO(3)$ a.e.\ in $I$ and
$$
M:= \left( \begin{array}{cc}
d_1'\cdot d_3 & d_2'\cdot d_3 \\
d_2'\cdot d_3 & \gamma
\end{array}\right),
$$
where $\gamma\in L^2(I)$ is such that
$$
\overline Q(d_1'\cdot d_3, d_2'\cdot d_3)=|M|^2+2|\det M| \qquad \text{a.e.\ in }I.
$$
Such a $\gamma$ can indeed be chosen measurable. Moreover, $\gamma\in L^2(I)$ because by minimality,
comparing $M$ to the same matrix with $0$ in place of $\gamma$,
we have
$$
\gamma^2 \leq |M|^2 + 2|\det M| \leq M_{11}^2 + 4 M_{12}^2  \qquad \text{a.e.\ in }I,
$$
and the right-hand side is in $L^1(I)$. 

Since $|\o y|<\ell$, the director $d_1$ cannot be constant, thus $M_{11}=d_1'\cdot d_3\neq 0$ on a set of positive measure. Moreover, the boundary conditions satisfied by $(y,r)$ guarantee that $A_M$ is admissible in the sense of \eqref{defvonM} with respect to the data $\o r$ and $\o\Gamma=\o y$.
By Proposition~\ref{haupt} there exist $\la_j\in C^{\infty}(\o I)$
and $p_j\in C^{\infty}(\o I; \S^1)$ 
such that $M^j := \la_j p_j\otimes p_j$ satisfies
\begin{enumerate}[(i)]
\item $p_j\cdot \u e_1 > 0$ everywhere on $\o I$, $p_j = \u e_1$ near $\d I$, and $A_{M^j}$ is admissible;
\item $M^j\weak M$ weakly in $L^2(I;\R^{2\times 2}_{\sym})$;
\item there holds
$$
\lim_{j\to\infty}\int_0^\ell |M^j|^2\,dt = \int_0^\ell( |M|^2 + 2|\det M|)\, dt.
$$
\end{enumerate}

Let $r_j : I\to SO(3)$ be the solution of the Cauchy problem
\begin{equation}
\label{odde}
\begin{cases}
(r_j)'= A_{M^j} r_j & \text{ in } I,\\
r_j(0)=I.
\end{cases}
\end{equation}
Since $M_j$ is smooth, so is $r_j$. Moreover, since $A_{M^j}$ is admissible, we have that $r_j(\ell)=\o r$ and
$$
\int_0^\ell r_j^T(t)e_1\, dt=\o y.
$$%
For $t\in I$ we define
$$
d^j_k(t) := r_j^T(t) e_k \quad \text{ for } k=1,2,3,  \qquad y^j(t):=\int_0^{t} d^j_1(s)\,ds,
$$
and we observe that $y^j(\ell)=\o y$.
One can show that $y^j\weak y$ weakly in $W^{2,2}(I;\R^3)$; see, for instance, the proof of \cite[Lemma~4.2]{FrMoPa}.
Moreover, it follows from \eqref{odde} that
\begin{eqnarray}\label{betatheta}
(d^j_1)^\prime\cdot d^j_2&=&(M^j)_{11}d_3^j\cdot d_2^j=0,\nonumber\\
(d^j_2)^\prime\cdot d^j_3&=&(M^j)_{12}d_3^j\cdot d^j_3=(M^j)_{12}\\
(d^j_1)^\prime\cdot d^j_3&=&(M^j)_{11}. \nonumber
\end{eqnarray}

Since the functions $p_j$ are smooth on the interval $\o I$, they can be extended smoothly to $\R$.
For $(t,s)\in \R^2$ we consider
$$
\Phi_j(t,s):=\big(t-s p_j(t){\,\cdot\,} \u e_2\big)\u e_1+s p_j(t){\,\cdot\,} \u e_1\,\u e_2.
$$
%, hence $\Phi_j$ can be extended smoothly to $\R^2$. Moreover, by definition,
%$$
%\Phi_j(t,0)=t\u e_1 \quad \text{ for every } t\in\o I.
%$$
Since 
\begin{equation}\label{nfij}
\nabla \Phi_j(t,s)=
\left(
\begin{array}{cc}
1-s p_j'(t){\,\cdot\,} \u e_2 & -p_j(t){\,\cdot\,} \u e_2\smallskip\\
s p_j'(t){\,\cdot\,} \u e_1 & p_j(t){\,\cdot\,} \u e_1
\end{array}
\right),
\end{equation}
we have that
$$
(\det\nabla \Phi_j)(t,0)=p_j(t){\,\cdot\,} \u e_1>0\quad \mbox{ for every }t\in\o I,
$$
hence, there exist an interval $I_j \supset\o I$ and $\eta_j>0$ such that
$$
\det\nabla \Phi_j>0\quad \mbox{ on }\o I_j\times [-\eta_j,\eta_j].
$$
By the Implicit Function Theorem there exists $\rho>0$ such that, if $(t,s),(t',s')\in\o I_j\times [-\eta_j,\eta_j]$, then
$$
0<|t-t'|^2+|s-s'|^2\leq \rho^2 \quad \Rightarrow \quad  \Phi_j(t,s)\neq  \Phi_j(t',s').
$$
On the other hand, using the definition of $\Phi_j$, we have that, up to choosing $\eta_j$ smaller if needed,  
there exists a constant $c>0$ such that, if $(t,s),(t',s')\in\o I_j\times [-\eta_j,\eta_j]$, then
$$
|t-t'|\geq\frac\rho2  \quad \Rightarrow \quad  |\Phi_j(t,s)-\Phi_j(t',s')|\geq c.
$$
These two facts together imply that, up to choosing $\eta_j$ smaller, $\Phi_j$ is injective on $\o I_j\times [-\eta_j,\eta_j]$.
By the invariance of domain theorem the set $U_j:=\Phi_j(I_j\times (-\eta_j,\eta_j))$ is open and, since both $\nabla \Phi_j$ and $(\det \nabla \Phi_j)^{-1}$ are continuous on $\o I_j\times [-\eta_j,\eta_j]$, the inverse $\Phi_j^{-1}$ belongs to $C^1(\o U_j;\R^2)$.
Since 
$$
\Phi_j(t,0)=t\u e_1 \quad \text{ for every } t\in I_j,
$$
there exists $\e_j>0$ such that $S_\e\subset U_j$ for any $0<\e\le\e_j$, in other words $\Phi_j^{-1}$ is defined on $S_{\e}$ for  $0<\e\le\e_j$.

For $(t,s)\in I\times\R$ we now define
$$
v_j(t,s) := y^j(t)+s b_j(t)
$$
where 
$$
b_j(t) :=  -p_j(t){\,\cdot\,} \u e_2\, d_1^j(t)+p_j(t){\,\cdot\,} \u e_1\,
d_2^j(t)
$$
for every $t\in I$, and
$$
u_j(x) := v_j\big(\Phi_j^{-1}(x) \big)
$$
for $x\in S_\e$.

By the definitions above
\begin{equation}\label{nuj}
\nabla v_j=((y^j)^\prime +s b_j^\prime | b_j),
\qquad
(\nabla u_j)(\Phi_j)\nabla \Phi_j =\nabla v_j.
\end{equation}
By means of \eqref{betatheta} one can check that
$$
(b_j)^\prime\cdot d_3^j=0,\qquad |b_j^\prime|=|p_j^\prime|.
$$
With these identities at hand one can show that $(\nabla v_j)^T\nabla v_j=(\nabla \Phi_j)^T\nabla \Phi_j$, that is, $(\nabla u_j)^T\nabla u_j=I$.
Clearly, $u_j(\cdot, 0) = y^j$ and $\d_1 u_j(\cdot, 0) = (y^j)'=d^j_1$.
Moreover,
we have
\begin{equation}
\label{note-1}
(\D u_j)(\Phi_j)\d_s \Phi_j = \d_s v_j = b_j.
\end{equation}
Using that $p_j\cdot \u e_1\neq 0$, one readily deduces that
\begin{equation}
\label{gradient}
\D u_j(\cdot, 0) = ( d_1^j \, |\, d_2^j).
\end{equation}
Taking derivatives with respect to $s$ on both sides of \eqref{note-1},
we see that 
$$
\D^2 u_j(\Phi_j)\d_s \Phi_j\cdot \d_s \Phi_j = 0,
$$
and therefore
\begin{equation}
\label{note}
A_{u_j}(\Phi_j)\d_s \Phi_j\cdot \d_s \Phi_j = 0.
\end{equation}
Taking derivatives in \eqref{gradient}, we see
that
$$
(A_{u_j}(\cdot, 0))_{11} =
d_3^j\cdot \d_1\d_1 u_j(\cdot, 0) = d_3^j\cdot (d_1^j)' = (M^j)_{11}
$$
and similarly that $(A_{u^j}(\cdot, 0))_{12} = (M^j)_{12}$. 
On the other hand, since $\d_s \Phi_j$ is orthogonal to $p_j$, we have that $M^j\d_s \Phi_j=0$. Using \eqref{note} and the fact that $p_j\cdot \u e_1\neq 0$, we conclude that $A_{u^j}(\cdot, 0) = M^j$.

We now prove that $u_j$ satisfies the boundary conditions, namely $u_j\in\AA_\e$. To this aim, we remark that 
%$\Phi_j(1,0)=(1,0)$, while 
condition (i) ensures that $\Phi_j(0,s)=(0,s)$ and $\Phi_j(\ell,s)=(\ell,s)$ for every $s$.
This implies that $u_j(0,0)=v_j(0,0)=y^j(0)=0$ and $u_j(\ell,0)=v_j(\ell,0)=y^j(\ell)=\o y$.
Again by condition (i) we have that $p_j'(0)=p_j'(\ell)=0$ and $b_j=d_2^j$ close to $\d I$, hence
$b_j'(0)=(d_2^j)'(0)$ and $b_j'(\ell)=(d_2^j)'(\ell)$. From \eqref{odde} it follows that $r_j^\prime(0)=A_{M_j}(0)r_j(0)=A_{\lambda_j\u e_1\otimes \u e_1}$, hence $(d_2^j)^\prime(0)=0$. Analogously, one can show that $(d_2^j)^\prime(\ell)=0$.
By \eqref{nfij} we deduce that $\nabla \Phi_j(0,s)=\nabla \Phi_j(\ell,s)=I$ for every $s$.
We now use \eqref{nuj} to conclude that $\nabla u_j(0,x_2)=(d_1^j(0)\, |\, b_j(0))$
and $\nabla u_j(\ell,x_2)=(d_1^j(\ell)\, |\, b_j(\ell))$ for $x_2\in(-\e_j,\e_j)$. By the initial condition in \eqref{nfij} and the fact that $p_j(0)=\u e_1$
we have that $\nabla u_j(0,x_2)= (e_1\, |\, e_2)$ for $x_2\in(-\e_j,\e_j)$. Since $r_j(\ell)=\o r$ and $p_j(\ell)=\u e_1$, we have that $\nabla u_j(\ell,x_2)= (\o d_1|\o d_2)$ for $x_2\in(-\e_j,\e_j)$.

We are now in a position to define the recovery sequence.
For $\e$ small enough, the maps $y^j_{\e} : S\to\R^3$ given by
$y^j_{\e}(x_1, x_2) = u^j(x_1, \e x_2)$
are well-defined scaled $C^2$-isometries of $S$ such that
$$
\De y^j_{\e} = (\D u^j)(T_{\e}) \to  \D u^j(\cdot, 0) =
( d_1^j \, |\, d_2^j )
\qquad \mbox{ strongly in } W^{1,2}(S; \R^{3\times2}),
$$
as $\e \to 0$; here $T_{\e}x = (x_1, \e x_2)$.
Set $A_{\e}^j:= A_{y^j_\e,\e}$. Then since $A_{u^j}(x_1, 0) = M^j(x_1)$,
we see that $A_{\e}^j\to M^j$ strongly in $L^2(S;\R^{2\times2}_{\sym})$, as $\e\to 0$.
Hence,
$$
\lim_{\e\to 0} J_\e(y^j_{\e})=
\lim_{\e\to 0} \int_S |A_{\e}^j|^2\, dx
= \int_0^\ell |M^j|^2\, dt  .
$$
By a diagonal argument we obtain the desired result.
\end{proof}

\section{Equilibrium equations for the Sadowsky functional}\label{eqeq}

In this section we consider the minimization problem for the Sadowsky functional \eqref{defJ}
on the class $\AA_0$ introduced in \eqref{AA0} and the corresponding Euler-Lagrange equations.

\subsection{Existence of a solution}
Let  $\chi_{\AA_0}$ be the indicator function of the set $\AA_0$.
Since the density function $\o{Q}$ is convex and $\o{Q}(\mu,\tau)\ge \mu^2+\tau^2$ for any $\mu,\tau\in\R$,  
it is easy to prove that the functional 
$E+\chi_{\AA_0}$ is $(W^{2,2}\times W^{1,2})$-weakly lower semicontinuous and coercive. By the direct method there exists a minimizer of $E$ in $\AA_0$.
Uniqueness is not ensured since $\o{Q}$ is not strictly convex.

\subsection{Euler-Lagrange equations}  

Let $|\o y|<\ell$ and let $(y,r)$ be a minimizer of $E$ on $\AA_0$. We always write $r^T=(d_1|d_2|d_3)$.
A system of Euler-Lagrange equations for $(y,r)$ has been derived in \cite{H2010}. 
In that paper the energy is considered as a function of $\kappa=d_1'\cdot d_2$, $\mu=d_1'\cdot d_3$ and $\tau=-d_2'\cdot d_3$, so that  
the constraint $d_1^\prime\cdot d_2=0$ corresponds to assuming $\kappa=0$. Taking 
$\dot\kappa=0$ in \cite[eq.\,(12)]{H2010}, one obtains that $(y,r)$ satisfies the second and the third equation in \cite[eq.\,(14)]{H2010}.
Note that, since $|\o y|<\ell$, one can rule out the degenerate case where the curve $y$ is a straight line. From these considerations we obtain the following proposition.

\begin{proposition}[Equilibrium equations]\label{see}
Let $|\o y|<\ell$ and let $(y,r)$ be a minimizer of $E$ on $\AA_0$ with $r^T=(d_1|d_2|d_3)$. Then
there exist Lagrange multipliers $\lambda_1,\lambda_2\in\R^3$  such that the following boundary value problem is satisfied:
\begin{equation}\label{scbc}
\begin{cases}
\dfrac{\d\o Q}{\d\tau}(d_1'\cdot d_3, d_2'\cdot d_3)=\big(\lambda_2+\lambda_1\wedge y\big){\,\cdot\,} d_1 &
\text{ a.e.\ in } (0,\ell), \smallskip\\
\dfrac{\d\o Q}{\d\mu}(d_1'\cdot d_3, d_2'\cdot d_3)=\big(\lambda_2+\lambda_1\wedge y\big){\,\cdot\,} d_2 &
\text{ a.e.\ in } (0,\ell), \smallskip\\
(d_1,d_2,d_3)\in SO(3) &
\text{ a.e.\ in } (0,\ell), \smallskip\\
d_1'\cdot d_2=0\ &
\text{ a.e.\ in } (0,\ell), \smallskip\\
y(0)=0,\ y(\ell)=\o y \smallskip\\
r(0)=I,\ r(\ell)=\o r.
\end{cases}
\end{equation}
\end{proposition}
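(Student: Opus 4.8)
The plan is to derive the Euler--Lagrange system \eqref{scbc} by computing the first variation of $E$ on the constrained class $\AA_0$, treating the pointwise constraints $r^T r = I$ and $d_1'\cdot d_2 = 0$ as well as the global constraints $y'=d_1$, $y(\ell)=\o y$, $r(\ell)=\o r$ by Lagrange multipliers. Since the referenced paper \cite{H2010} already carries out the variational analysis of the energy as a function of $\kappa = d_1'\cdot d_2$, $\mu = d_1'\cdot d_3$, $\tau = -d_2'\cdot d_3$, the economical approach is to \emph{quote} that analysis and specialize it. Concretely, the constraint $d_1'\cdot d_2=0$ corresponds to imposing $\kappa \equiv 0$; taking admissible variations that preserve this (i.e.\ setting $\dot\kappa = 0$ in \cite[eq.\,(12)]{H2010}) kills the first of the three Euler--Lagrange equations in \cite[eq.\,(14)]{H2010} (which would otherwise determine $\partial\o Q/\partial\kappa$), and leaves exactly the second and third equations, which read precisely as the first two lines of \eqref{scbc} once one identifies $\partial\o Q/\partial\tau$, $\partial\o Q/\partial\mu$ evaluated at $(d_1'\cdot d_3, d_2'\cdot d_3)$ and the conserved ``force + moment'' combination $\lambda_2 + \lambda_1\wedge y$ arising from the translational and rotational invariance of the bulk density. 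The remaining four lines of \eqref{scbc} are simply the defining constraints of $\AA_0$, which hold by assumption since $(y,r)\in\AA_0$.

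The first step is to record that a minimizer exists (this is the content of the preceding subsection: $E + \chi_{\AA_0}$ is weakly lower semicontinuous and coercive on $W^{2,2}\times W^{1,2}$, using $\o Q(\mu,\tau)\ge \mu^2+\tau^2$ and convexity of $\o Q$), so that $(y,r)$ in the statement is genuinely a critical point. The second step is the regularity/nondegeneracy observation: because $|\o y| < \ell$, the curve $y$ cannot be a straight segment (a straight $y$ would force $|y(\ell)-y(0)| = \int_0^\ell |d_1| = \ell$), so we avoid the degenerate branch of \cite[eq.\,(14)]{H2010} where the director frame is not uniquely constrained and the multiplier structure collapses; this is exactly the hypothesis under which the cited equations are valid. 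The third step is to invoke \cite{H2010} with $\dot\kappa = 0$ and translate notation: what \cite{H2010} calls $\tau$ is $-d_2'\cdot d_3$ here, so one must check the sign when writing $\partial\o Q/\partial\tau$; since $\o Q(\mu,\tau)$ is even in $\tau$ (see \eqref{barQ}), $\partial\o Q/\partial\tau$ is odd, and the sign works out so that the equation can be written in terms of $d_2'\cdot d_3$ as in \eqref{scbc} after absorbing the sign into $\lambda_2,\lambda_1$. The fourth step is bookkeeping: identify the two constant vectors in $\R^3$ produced by Noether's theorem (invariance of $\int_0^\ell \o Q$ under rigid motions of the target) as the Lagrange multipliers $\lambda_1$ (associated to the constraint $y(\ell)=\o y$, equivalently to translations) and $\lambda_2$ (to $r(\ell)=\o r$, equivalently to rotations), and note that the boundary terms in the integration by parts vanish precisely because the variations of $y$ and $r$ are compactly supported in $(0,\ell)$ — the clamped conditions at both ends allow no boundary variation, which is why no natural boundary conditions appear and the prescribed conditions $y(0)=0$, $y(\ell)=\o y$, $r(0)=I$, $r(\ell)=\o r$ are simply carried over.

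The main obstacle I anticipate is \emph{not} the derivation itself — which is essentially a citation — but rather the careful verification that the class of admissible variations used in \cite{H2010} is rich enough once the extra constraint $\kappa\equiv 0$ is imposed, i.e.\ that restricting to variations with $\dot\kappa = 0$ does not accidentally discard information needed for the second and third equations, and dually that it correctly removes the first equation without leaving a residual constraint. One must argue that $\kappa = 0$ is a ``holonomic-type'' pointwise constraint on the curvature that can be enforced by a scalar Lagrange multiplier $p(s)$, and that the stationarity condition with respect to $\kappa$ becomes the \emph{definition} of $p$ rather than an equation for $\o Q$, while stationarity with respect to $\mu$ and $\tau$ is unaffected in form. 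A secondary subtlety is differentiability of $\o Q$: from \eqref{barQ}, $\o Q$ is $C^1$ (the two branches match to first order along $|\mu|=|\tau|$) but not $C^2$, so $\partial\o Q/\partial\mu$ and $\partial\o Q/\partial\tau$ are well-defined a.e.\ and the equations hold a.e.\ in $(0,\ell)$ as stated; one should note this so that the statement ``a.e.'' is justified rather than asserted. Beyond these points the argument is routine, and I would keep the written proof short, delegating the heavy variational computation to the reference and only spelling out the specialization $\dot\kappa=0$ and the nondegeneracy $|\o y|<\ell$.
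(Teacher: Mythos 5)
Your proposal follows essentially the same route as the paper: both reduce the statement to a citation of the Euler--Lagrange system in \cite{H2010}, specialized by setting $\dot\kappa=0$ to encode the constraint $d_1'\cdot d_2=0$, with the hypothesis $|\o y|<\ell$ used only to exclude the degenerate straight-line case. The paper's proof merely adds the final bookkeeping you implicitly perform, namely rewriting $\int_t^\ell d_1(s)\,ds=\o y-y(t)$ and absorbing the constant $-\lambda_1\wedge\o y$ into $\lambda_2$ to arrive at the combination $\lambda_2+\lambda_1\wedge y$.
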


\begin{proof}
The second and the third equation in \cite[eq.\,(14)]{H2010} are
\begin{eqnarray*}
\frac{\d\o Q}{\d\tau}(d_1'(t)\cdot d_3(t), d_2'(t)\cdot d_3(t)) &= &\Big(\lambda_2-\lambda_1\wedge\int_t^\ell d_1(s)\,ds\Big)\cdot d_1(t),\\
\frac{\d\o Q}{\d\mu}(d_1'(t)\cdot d_3(t), d_2'(t)\cdot d_3(t)) & = & \Big(\lambda_2-\lambda_1\wedge\int_t^\ell d_1(s)\,ds\Big)\cdot d_2(t).
\end{eqnarray*}
Since $d_1=y'$, we have that $\int_t^\ell d_1(s)\,ds=\o y-y(t)$. The thesis follows
by replacing the multiplier $\lambda_2$ with $\lambda_2-\lambda_1\wedge \o y$.
\end{proof}

\begin{remark}
The two partial derivatives on the left-hand side of the first two equations in \eqref{scbc} have the mechanical meaning of a twisting and bending moment, respectively, and are given by
\begin{eqnarray*}
\frac{\d\o Q}{\d\tau}(\mu,\tau)&=&
\begin{cases}
\dis 4\tau\frac{{\mu}^2+{\tau}^2}{{\mu}^2}&\mbox{ if }|\mu|>|\tau|,\\
8\tau&\mbox{ if }|\mu|\le|\tau|,
\end{cases}\\
\frac{\d\o Q}{\d\mu}(\mu,\tau)&=&
\begin{cases}
\dis 2\frac{{\mu}^4-{\tau}^4}{{\mu}^3}&\mbox{ if }|\mu|>|\tau|,\\
0&\mbox{ if }|\mu|\le|\tau|.
\end{cases}
\end{eqnarray*}
\end{remark}

The next proposition shows that on a closed planar curve like a circle, where the curvature is always positive, 
the constraint $d_1'\cdot d_2=0$ is incompatible with the boundary condition $\o r=(e_1|-e_2|-e_3)$.
As a consequence, the centerline of a developable M\"obius band, if planar, must contain a segment. 

\begin{proposition}\label{pcd2c}
Let $(y, r)\in W^{2,2}(I; \R^3)\times W^{1,2}(I; SO(3))$ be such that with $r^T=(d_1|d_2|d_3)$, $y'=d_1$, and $d_1'\cdot d_2=0$ a.e.\ on $I$.
If the curve $y$ is planar and $d_1'\cdot d_3>0$ a.e.\ in $I$, then $d_2$ is constant and orthogonal to the plane of the curve.
\end{proposition}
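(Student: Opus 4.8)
The plan is to exploit the fact that for a curve lying in a fixed plane, the Frenet frame is essentially two-dimensional, and to translate the hypotheses into statements about the rotation matrix $r$. Since $y$ is planar, after a rigid motion we may assume $y(I)\subset\{x_3=0\}$, so that $d_1=y'$ lies in the $e_1$-$e_2$ plane and $|d_1|=1$. Write $d_1(s)=(\cos\phi(s),\sin\phi(s),0)$ with $\phi\in W^{2,2}$; then $d_1'=\phi'(-\sin\phi,\cos\phi,0)$, which is the in-plane normal multiplied by $\phi'$. The curvature of the planar curve is (up to sign) $\phi'$.

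Next I would decompose $d_2$ and $d_3$ in terms of the in-plane frame $\{d_1, n\}$ with $n:=(-\sin\phi,\cos\phi,0)$ and the out-of-plane direction $e_3$. Since $(d_1,d_2,d_3)\in SO(3)$ and $d_2\perp d_1$, we can write $d_2=\cos\psi\, n+\sin\psi\, e_3$ and $d_3=-\sin\psi\, n+\cos\psi\, e_3$ for some function $\psi\in W^{1,2}(I)$. The constraint $d_1'\cdot d_2=0$ becomes $\phi' n\cdot d_2=\phi'\cos\psi=0$ a.e. Meanwhile $d_1'\cdot d_3=\phi' n\cdot d_3=-\phi'\sin\psi$, and the hypothesis $d_1'\cdot d_3>0$ forces $\phi'\ne0$ a.e., hence $\cos\psi=0$ a.e. Thus $\psi\equiv\pm\pi/2$ on each connected piece; since $\psi$ is continuous (indeed $W^{1,2}\hookrightarrow C^0$ in one dimension) it is globally constant, equal to $\pi/2$ or $-\pi/2$. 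In either case $d_2=\pm e_3$, which is constant and orthogonal to the plane of the curve, as claimed.

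The main step requiring a little care is the reduction to the representation $d_2=\cos\psi\,n+\sin\psi\,e_3$ with $\psi$ of Sobolev regularity: one must check that $\psi$ can be chosen in $W^{1,2}(I)$, which follows because $\phi\in W^{2,2}(I)\hookrightarrow C^1(\o I)$ makes $n$ a $W^{1,2}$ (indeed $C^1$) frame field, and $d_2$ is $W^{1,2}$, so its coordinates in this frame are $W^{1,2}$; then the angle can be lifted with the same regularity by the standard lifting of $\S^1$-valued $W^{1,2}$ maps in one dimension (which is continuous). The genuinely substantive point is then simply that the product $\phi'\cos\psi$ vanishes a.e.\ while $\phi'$ is a.e.\ nonzero (from $d_1'\cdot d_3>0$), forcing $\cos\psi=0$ a.e., after which continuity of $\psi$ upgrades this to a global constant. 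I do not expect any serious obstacle; the only thing to watch is keeping track of the orientation/sign conventions so that $(d_1,d_2,d_3)\in SO(3)$ is genuinely respected and the conclusion ``$d_2$ constant and orthogonal to the plane'' is stated correctly (it is $\pm e_3$, the sign depending on whether $\phi'>0$ or $\phi'<0$ a.e., and note $d_1'\cdot d_3>0$ together with $\cos\psi=0$ pins down $\sin\psi=-\sgn\phi'$, hence pins down the sign of $d_2$ on each region where $\phi'$ has constant sign).
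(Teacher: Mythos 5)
Your proof is correct: in one dimension the two liftings you invoke do exist ($\phi\in W^{2,2}(I)$ for $d_1$, since $W^{2,2}(I)\hookrightarrow C^1(\overline I)$, and $\psi\in W^{1,2}(I)$ for the coordinates of $d_2$ in the moving frame $\{n,e_3\}$), the constraint becomes $\phi'\cos\psi=0$ a.e., the hypothesis $d_1'\cdot d_3=-\phi'\sin\psi>0$ a.e.\ forces $\phi'\neq 0$ a.e., hence $\cos\psi=0$ a.e.\ and, by continuity of $\psi$ and discreteness of the zero set of cosine, $\psi$ is constant, so $d_2=\pm e_3$. The paper reaches the same conclusion by a shorter, coordinate-free route that avoids any lifting: from $|d_1|=1$ and $d_1'\cdot d_2=0$ one gets $d_1'=\mu d_3$ with $\mu=d_1'\cdot d_3$, hence $d_1\wedge d_1'=-\mu d_2$ and, since $\mu>0$ a.e., $d_2=-d_1\wedge d_1'/\mu$ a.e.; because $d_1$ and $d_1'$ are both orthogonal to the unit normal $k$ of the plane (indeed $d_1'\cdot k=(d_1\cdot k)'=0$), $d_2$ is parallel to $k$ a.e., and continuity of $d_2$ (from $W^{1,2}(I)\hookrightarrow C(\overline I)$) gives $d_2\equiv k$ or $d_2\equiv -k$. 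So the skeleton is the same (planarity plus the constraint and nonvanishing curvature force $d_2$ to be normal to the plane a.e., then continuity upgrades this to constancy), but your execution via angle parameterization costs the two lifting verifications you rightly flag, while buying slightly more explicit information, namely that the sign of $d_2\cdot e_3$ is $-\mathrm{sgn}\,\phi'$; the paper's cross-product identity obtains the parallelism to $k$ in one line, without normalizing the plane to $\{x_3=0\}$ or choosing coordinates at all.
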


\begin{proof} 
Let $\mu:=d_1'\cdot d_3$ and let $k$ be a normal unit vector to the plane of the curve, that is, $|k|=1$ and $d_1\cdot k\equiv 0$.
Since $d_1'=\mu d_3$, we have that $d_1\wedge d_1'=-\mu d_2$. Using the fact that $\mu(s)>0$ for a.e.\ $s\in I$, we deduce that $d_2=-d_1\wedge d_1'/\mu$ a.e.\ in $I$. 
On the other hand, both $d_1$ and $d_1'$ are orthogonal to $k$ (indeed,
$d_1'\cdot k=(d_1\cdot k)'=0$), hence $d_2$ is parallel to $k$ a.e.\ in $I$.  By continuity $d_2$ must be constantly equal to $k$ or to $-k$ in $I$.
\end{proof}

\section{Regular M\"obius bands at equilibrium}\label{lastsec}

It is easy to construct a developable M\"obius band by adding segments to the centerline in a way that it remains planar, see Example \ref{exnomin}.
On the other hand, we can show that a developable M\"obius band, whose centerline is regular and planar, cannot satisfy the equilibrium equations. In other words, the centerline of a regular developable M\"obius band at equilibrium cannot be planar.

The regularity notion that we need, is the following.

\begin{definition}\label{partition} A solution $(y, r)\in W^{2,2}(I; \R^3)\times W^{1,2}(I; SO(3))$ of the boundary value problem \eqref{see}--\eqref{scbc} with $r^T=(d_1|d_2|d_3)$
%A triple $r=(d_1|d_2|d_3)\in W^{1,2}(0,\ell;SO(3))$ 
is said to be {\it regular} 
if  there exists a family $\mathscr{F}$ of pairwise disjoint open subintervals of $(0,\ell)$
such that $|(0,\ell)\setminus \cup\mathscr{F}|=0$ and such that
in every open interval $J\in\mathscr{F}$  the curvature $\mu=d_1'\cdot d_3$ is (a.e.) either strictly positive, strictly negative, or zero. 
\end{definition}

\begin{remark}
An example of a function $\mu\in L^2(0,1)$ that is not regular in the sense of Definition~\ref{partition} is
the characteristic function of a fat Cantor set in $(0,1)$ (or of any closed set with positive measure and empty interior).
%Indeed it is measurable (since $C$ is measurable) and essentially bounded. Moreover it not essentially continuous (that is not Lebesgue-equivalent to a continuous function). Indeed, if $A$ is any null set, then $(0,1)\setminus A$ contains an element $x$ of $C$.  Every neighborhood of $x$ intersects $(0,1)\setminus C$ in a set of positive measure (because nonempty open sets have positive measure), so for all $\delta>0$, $(x-\delta,x+\delta)\setminus A$ contains elements of $(0,1)\setminus C$.  This implies that the restriction of $\chi_C$ to $(0,1)\setminus A$ is discontinuous at $x$.
\end{remark}

In the next theorem we show that a regular solution of the Euler-Lagrange equations with boundary conditions $\o y=0$ and  $\o r=(e_1|-e_2|-e_3)$ cannot be planar. In the proof we use in a crucial way the expression of $\o Q$ for small curvatures, which is exactly the region where $\o Q$ differs from the classical Sadowsky energy density.

\begin{theorem}\label{cnp}
Assume that $(y,r)$ be a regular solution of \eqref{scbc} with $\o y=0$ and $\o r=(e_1|-e_2|-e_3)$. 
Then the curve $y$ is not planar, i.e.,  $d_1$ does not lie on a plane. 
\end{theorem}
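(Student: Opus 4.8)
The plan is to argue by contradiction: assume $(y,r)$ is a regular planar solution with the M\"obius boundary conditions $\o y=0$ and $\o r=(e_1|-e_2|-e_3)$, and derive a contradiction. The first observation is that on any subinterval $J\in\mathscr F$ where $\mu=d_1'\cdot d_3$ is \emph{strictly positive} (or strictly negative), Proposition~\ref{pcd2c} applied on $J$ forces $d_2$ to be constant there and orthogonal to the plane of the curve; since $d_2$ is globally continuous on $\o I$, this constant value must be the \emph{same} unit normal $\pm k$ on all such intervals. On the intervals where $\mu\equiv 0$ we have $d_1'=0$, so the curve is a straight segment there and $d_2$ may rotate; but the constraint $d_1'\cdot d_2=0$ is vacuous and, more importantly, on such an interval the twisting strain $\tau=d_2'\cdot d_3$ satisfies $|\mu|=0\le|\tau|$, so $\overline Q$ is in its ``small curvature'' branch $4\tau^2$. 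This is the regime the theorem header flags as crucial.

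**Key steps.** First I would record the equilibrium equations from Proposition~\ref{see} with the particular data, paying attention to the second equation $\partial_\mu\overline Q(\mu,\tau)=(\lambda_2+\lambda_1\wedge y)\cdot d_2$. On an interval where $\mu\equiv0$ we have $\partial_\mu\overline Q(0,\tau)=0$, so $(\lambda_2+\lambda_1\wedge y)\cdot d_2\equiv 0$ there; and on an interval where $\mu>0$, using the explicit formula $\partial_\mu\overline Q=2(\mu^4-\tau^4)/\mu^3$ together with $d_2\equiv k$ constant, one gets pointwise information relating the force $\lambda_2+\lambda_1\wedge y$ to the fixed vector $k$. Second, I would assemble the global picture: on the ``positive/negative curvature'' part the frame is essentially a planar framed curve with $d_2\equiv \pm k$, while on the ``zero'' part it is a straight segment possibly carrying twist. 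The monodromy condition $r(\ell)=\o r=(e_1|-e_2|-e_3)$ says the total frame has undergone a half-turn about $e_1=d_1(0)$; combined with $y(\ell)=y(0)=0$ (the curve closes up) and planarity, I expect a parity/topological obstruction. Third, the twist $\tau$ on the straight segments contributes a definite, computable amount to the change in $d_2$: since $d_1'=0$ there, $d_2'=\tau\,(d_3\wedge\cdots)$ and $d_2$ rotates in the plane orthogonal to $d_1$, so the net rotation of $d_2$ across a straight segment equals $\int_J\tau$. For the frame to realize the required half-twist while $d_2$ stays pinned to $\pm k$ on every curved piece, all of the ``twisting'' must be concentrated on the straight segments, and there the Euler--Lagrange equation for $\tau$ (the first equation in \eqref{scbc}, with $\partial_\tau\overline Q(0,\tau)=8\tau$) forces $8\tau=(\lambda_2+\lambda_1\wedge y)\cdot d_1$; since $d_1$ is constant on a straight segment and $y$ is affine there, the right-hand side is an affine function of arclength, so $\tau$ is affine. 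I would then integrate and match the rotation angles of $d_2$ at the junctions with the curved pieces (where $d_2$ is frozen) to get a rigid system of constraints, and show it is incompatible with the half-turn monodromy — the half-twist is ``odd'' while the available contributions force an outcome that cannot produce it without violating either continuity of $d_2$, the closure $y(\ell)=0$, or the equilibrium relations.

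**Main obstacle.** The delicate point is the bookkeeping at the junctions between curved intervals (where $d_2=\pm k$ is locked) and straight intervals (where $d_2$ rotates by $\int\tau$): one must show these rotations cannot conspire to produce the net half-turn of the frame about $d_1(0)=e_1$ demanded by $\o r=(e_1|-e_2|-e_3)$ while simultaneously respecting $y(\ell)=0$ and the force-balance equations. In particular I would need to rule out the possibility that a straight segment ``absorbs'' exactly a half-turn: there the equilibrium equation makes $\tau$ affine in $s$, and the geometry of the segment (its direction $d_1$, its endpoints, and the requirement that the curved pieces glue continuously with $d_2=\pm k$ on both sides) constrains $\int\tau$ to lie in a discrete set not containing $\pi\ (\mathrm{mod}\ 2\pi)$ — making this precise, and handling the case where $\mathscr F$ contains infinitely many intervals (so the curve is a countable concatenation of arcs and segments), is where the regularity hypothesis of Definition~\ref{partition} and a careful compactness/continuity argument on $\o I$ must be used. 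I would finish by contradiction, concluding that no regular planar solution exists, hence the centerline of a regular developable M\"obius band at equilibrium is non-planar, i.e., $d_1$ does not lie on a plane.
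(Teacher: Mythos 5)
Your setup coincides with the paper's: decompose $(0,\ell)$ via Definition~\ref{partition} into a curved part $C$ (where Proposition~\ref{pcd2c} pins $d_2=\pm k$, hence $\tau=0$ and $\overline Q$ is in the regime $|\mu|>|\tau|$) and a straight part $S$ (where $\mu=0$ and $\overline Q=4\tau^2$), and then read off the equilibrium equations \eqref{scbc} on each part. But the mechanism you propose for the contradiction --- a parity/monodromy obstruction obtained by ``matching rotation angles at the junctions'' and showing that $\int_J\tau$ lies in a discrete set avoiding $\pi$ --- is not the right one, and, as you concede, you cannot complete it. Nothing in your bookkeeping forbids a straight segment from carrying a twist with $\int_J\tau=\pi$: Example~\ref{exnomin} is exactly such a configuration; it satisfies all the kinematic constraints, the closure condition, and the half-turn monodromy, and it is excluded only by the force balance, not by any topological parity.

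The step you are missing is the junction condition. Set $F(t):=\lambda_2+\lambda_1\wedge y(t)$, which is continuous on $[0,\ell]$. The equations give $F\cdot d_1=8\tau$ and $F\cdot d_2=0$ on $S$, while $F\cdot d_1=0$ and $F\cdot d_2=2\mu$ on $C$. Expanding $F$ in the moving frame and letting $t\to t_0$ from either side of an interface between $S$ and $C$ yields $8\tau(t_0^-)\,d_1(t_0)=2\mu(t_0^+)\,d_2(t_0)$, and the orthogonality of $d_1(t_0)$ and $d_2(t_0)$ forces $\tau(t_0^-)=\mu(t_0^+)=0$. Moreover, on a straight piece $y(t)=d_1t+c$ with $d_1$ constant, so $F(t)\cdot d_1=\lambda_2\cdot d_1+(\lambda_1\wedge c)\cdot d_1$ because $(\lambda_1\wedge d_1)\cdot d_1=0$; thus $\tau$ is \emph{constant} on each straight piece (not merely affine, as you claim), and since it vanishes at the junction endpoints it vanishes identically there. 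Hence $\tau\equiv0$ on all of $[0,\ell]$, which is incompatible with $d_2(\ell)=-e_2=-d_2(0)$. Without this junction argument, which is the heart of the paper's proof, your proof does not close.
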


\begin{proof} 
Assume by contradiction that $d_1$ lies on a plane, which we assume to
be orthogonal to a unit vector $k$, that is,  $d_1(s)\cdot k=0$ for every
$s\in(0,\ell)$. We set $\mu:=d_1'\cdot d_3$ and $\tau:=-d_2'\cdot d_3$.

Let $\mathscr{F}$ be a family of pairwise disjoint open subintervals of $(0,\ell)$
as in Definition~\ref{partition} and let $J\in \mathscr{F}$.
If $\mu(s)>0$ for a.e.\ $s\in J$, then, by applying Proposition~\ref{pcd2c} on the interval $J$ we have that $d_2$ is constantly equal to $k$ or $-k$ in $J$, hence $d_2'=0$. By definition of $\tau$, 
this implies that $\tau=0$ in $J$. 
The same conclusion is true if $\mu(s)<0$ for a.e.\ $s\in J$. 
If, instead, $\mu(s)=0$ for a.e.\ $s\in J$, then the curvature vanishes on the interval and the curve is a segment. 
Therefore, globally the curve is a union of segments and of arcs with $\tau=0$.

Let us set $S:=\cup \{J\in\mathscr{F}:\ \mu=0\mbox{ a.e.\ in }J\}$ and $C:=\cup\{J\in\mathscr{F}:\ \mu\ne0\mbox{ a.e.\ in }J\}$. 

On the set $S$ we have $\mu=0$, which corresponds to the regime $|\mu|\le|\tau|$, while on $C$ we have $\tau=0$,  
corresponding to the opposite regime $|\mu|>|\tau|$.

Hence, using also the fact that $y(\ell)=0$, equations \eqref{scbc} become
\begin{equation}\label{eeonS}
\begin{cases}
8\tau(t)=\Big(\lambda_2+\lambda_1\wedge y(t)\Big)\cdot d_1(t)\\
0=\Big(\lambda_2+\lambda_1\wedge y(t)\Big)\cdot d_2(t)
\end{cases}
\mbox{ on }S,
\end{equation}
\begin{equation}\label{eeonC}
\begin{cases}
0=\big(\lambda_2+\lambda_1\wedge y(t)\big)\cdot d_1(t)\\
2\mu(t)=\big(\lambda_2+\lambda_1\wedge y(t)\big)\cdot d_2(t)
\end{cases}
\mbox{ on }C.
\end{equation}
Then we have
\begin{equation}\label{eeglob}
\lambda_2-\lambda_1\wedge y(t)=
\begin{cases}
8\tau(t)\, d_1(t)+\big(\lambda_2+\lambda_1\wedge y(t)\big)\cdot d_3(t)\,d_3(t)&\mbox{ on }S\\
2\mu(t)\, d_2(t)+\big(\lambda_2+\lambda_1\wedge y(t)\big)\cdot d_3(t)\,d_3(t)&\mbox{ on }C
\end{cases}
\end{equation}
and, as a consequence, the curvatures $\mu$ and $\tau$ turn out to be continuous on the sets $S$ and $C$.
By the closure assumption $\o y=0$, we have that $C\ne\varnothing$. 
On the other hand, by the twisting assumption $d_2(\ell)=-e_2=-d_2(0)$ we have that $S\ne\varnothing$, too.
%Therefore, there exists at least two junction points $t_1,t_2\in[0,\ell]$, $t_1\ne t_2$, between the regions $S$ and $C$.
By the first equation in \eqref{eeonS} and the second equation in \eqref{eeonC} we also have that 
at the boundary points between the two regions $S$ and $C$ there exist finite the limits from the left and from the right
of $\mu$ and $\tau$. Denoting by $t_0$ one of such points, with $S$ on the left and $C$ on the right to fix ideas,
by continuity of the left-hand side in \eqref{eeglob} we have that
$$
8\tau(t_0^-)d_1(t_0)=2\mu(t_0^+)d_2(t_0),
$$
where we have set
$$
\tau(t_0^-):=\lim_{t\to t_0^-}\tau(t), \qquad \mu(t_0^+):=\lim_{t\to t_0^+}\mu(t).
$$
Since $d_1(t_0)$ and $d_2(t_0)$ are orthogonal, we deduce that $\tau(t_0^-)=\mu(t_0^+)=0$. 
On the other hand, $\tau=0$ on $C$ and $\mu=0$ on $S$, hence $\tau(t_0^+)=\mu(t_0^-)=0$. Thus, we conclude
that the functions $\mu$ and $\tau$ are globally continuous and must vanish at the boundary points between $S$ and $C$.

From the first equation in \eqref{eeonS} it follows that $\tau$ is constant.
Indeed, on every interval $J$ of  $S$ the curve $y$ is affine, that is, $y$ is of the form $y(t)=d_1t+c$ with constants $d_1$ and $c$. Therefore,
on $J$ we have
\begin{eqnarray*}
8\tau(t)%&=&\Big(\lambda_2-\lambda_1\wedge\int_t^\ell d_1(s)\,ds\Big)\cdot d_1(t)\\
&=&
\Big(\lambda_2+\lambda_1\wedge y(t)\Big)\cdot d_1\\
&=&\lambda_2\cdot d_1+t\lambda_1\wedge d_1 \cdot d_1 +\lambda_1\wedge c\cdot d_1\\
&=&
\lambda_2\cdot d_1+\lambda_1\wedge c\cdot d_1.
\end{eqnarray*}
Being constant and equal to zero at the boundary points, $\tau=0$ on $S$. 
We conclude that $\tau=0$ on $[0,\ell]$ and this gives a contradiction, since the boundary condition on $r$ cannot be satisfied.
\end{proof} 

We conclude with an explicit example of a developable M\"obius band, whose centerline is planar. Because of the previous theorem 
it cannot satisfy the equilibrium equations and thus, it cannot be a minimizer of the Sadowsky functional \eqref{defJ}.

\begin{example}[Non-minimal developable M\"obius band]\label{exnomin} 
We consider the framed curve $(y,r)$ given by
$$
y(t)=\begin{cases}
(t,0,0)&\mbox{ if }t\in(0,1),\\
(1+\sin(t-1),0,1-\cos(t-1))&\mbox{ if }t\in(1,1+\pi),\\
(2+\pi-t,0,2)&\mbox{ if }t\in(1+\pi,2+\pi),\\
(\sin(t-2),0,1-\cos(t-2))&\mbox{ if }t\in(2+\pi,2+2\pi),\\
\end{cases}
$$
%\begin{center}
%\includegraphics[width=0.6\textwidth]{moebius_no_ee}
%\end{center}
$$
d_1(t)=\begin{cases}
(1,0,0)&\mbox{ if }t\in(0,1),\\
(\cos(t-1),0,\sin(t-1))&\mbox{ if }t\in(1,1+\pi),\\
(-1,0,0)&\mbox{ if }t\in(1+\pi,2+\pi),\\
(\cos(t-2),0,\sin(t-2))&\mbox{ if }t\in(2+\pi,2+2\pi),\\
\end{cases}
$$
$$
d_2(t)=\begin{cases}
(0,\cos(\pi t),\sin(\pi t))&\mbox{ if }t\in(0,1),\\
(0,-1,0)&\mbox{ if }t\in(1,1+\pi),\\
(0,-1,0)&\mbox{ if }t\in(1+\pi,2+\pi),\\
(0,-1,0)&\mbox{ if }t\in(2+\pi,2+2\pi),\\
\end{cases}
$$
$$
d_3(t)=d_1(t)\wedge d_2(t)=\begin{cases}
(0,-\sin(\pi t),\cos(\pi t))&\mbox{ if }t\in(0,1),\\
(\sin(t-1),0,-\cos(t-1))&\mbox{ if }t\in(1,1+\pi),\\
(0,0,1)&\mbox{ if }t\in(1+\pi,2+\pi),\\
(\sin(t-2),0,-\cos(t-2))&\mbox{ if }t\in(2+\pi,2+2\pi).\\
\end{cases}
$$
The boundary conditions $y(0)=y(\ell)=0$, $r(0)=I$ and  $r(\ell)=(e_1|-e_2|-e_3)$ with $\ell=2+2\pi$ are satified.
For $t\in(0,1)$ the director $d_2$ rotates from $e_2$ to $-e_2$, while it is constantly equal to $-e_2$ for $t\in(1,\ell)$. Since $d_1$ is constant on $(0,1)$ and it always belongs to the plane $x_1x_3$, we have that $d_1'\cdot d_2=0$, hence $(y,r)\in\AA_0$.
However, $(y,r)$ cannot be a minimizer since the curve $y$ belongs to the plane $x_1x_3$.
\end{example}

\bigskip\bigskip

\noindent
{\bf Acknowledgements.} The authors would like to thank Gianni Dal Maso for several discussions about the content of Section~\ref{lastsec}. The work of LF has been supported by DMIF--PRID project PRIDEN. MGM acknowledges support by MIUR--PRIN 2017. LF and MGM are members of GNAMPA--INdAM and RP is a member of GNFM--INdAM.

\end{document}